\newcounter{tempcounteri}
\newcounter{tempcounterii}
\newenvironment{enumeraterom}{%
  \begin{enumerate}%
   
   }{%
  \end{enumerate}}
\newcommand{\bmath}[1]{\mbox{\mathversion{bold}$#1$}}
\newcommand{\C}{\bmath{C}}
\newcommand{\Z}{\bmath{Z}}
\newcommand{\R}{\bmath{R}}
\newcommand{\CP}{\bmath{C\!P}}
\newcommand{\SL}{\operatorname{SL}}
\newcommand{\SU}{\operatorname{SU}}
\newcommand{\U}{\operatorname{U}}
\newcommand{\Herm}{\operatorname{Herm}}
\newcommand{\Trig}{\operatorname{{\mathcal T}}}
\newcommand{\cmcone}{\mbox{\rm CMC}-$1$}
\newcommand{\trace}{\operatorname{trace}}
\newcommand{\ord}{\operatorname{ord}}
\newcommand{\id}{\operatorname{id}}
\newcommand{\Hyp}{\mathcal{H}}
\newcommand{\metone}{\operatorname{Met}_1}
\renewcommand{\Re}{\operatorname{Re}}
\renewcommand{\Im}{\operatorname{Im}}
\newcommand{\Res}{\operatornamewithlimits{Res}}
   \newtheorem{theorem}{Theorem}[section]
   \newtheorem*{theorem*}{Theorem}
   \newtheorem{proposition}[theorem]{Proposition}
   \newtheorem{corollary}[theorem]{Corollary}
   \newtheorem{lemma}[theorem]{Lemma}
 \theoremstyle{definition}
   \newtheorem{definition}[theorem]{Definition}
 \theoremstyle{remark}
   \newtheorem{remark}[theorem]{Remark}
   \newtheorem*{remark*}{Remark}
\numberwithin{equation}{section}
\title[Symmetric CMC-1 surfaces with irregular ends]{
     Constructing mean curvature $1$ surfaces \\
            in $H^3$ with irregular ends 
   }
\author{Wayne Rossman}
\author{Masaaki Umehara}
\author{Kotaro Yamada}
\date{December 15, 2002}
\address[Rossman]{%
   Department of Mathematics, Faculty of Science,
   Kobe University,
   Rokko, Kobe 657-8501, Japan%
}
\email{wayne@math.kobe-u.ac.jp}
\address[Umehara]{%
   Department of Mathematics, Graduate School of Science,
   Osaka University,
   Osaka 560-0043, Japan%
}
\email{umehara@math.sci.osaka-u.ac.jp}
\address[Yamada]{%
   Faculty of Mathematics,
   Kyushu University 36, 6-10-1
   Hakozaki, Higashi-ku, Fukuoka 812-8581, Japan%
}
\email{kotaro@math.kyushu-u.ac.jp}
\begin{document}
\begin{abstract}
With the developments of the last decade on complete constant mean
curvature $1$ (\cmcone) surfaces in the hyperbolic $3$-space $H^3$, many
examples of such surfaces are now known.  
However, most of the known examples have regular ends.  
(An end is irregular, resp.~regular, if the hyperbolic Gauss map of the
 surface has an essential singularity, resp.~at most a pole, there.)  
There are some known surfaces with irregular ends, 
but they are all either reducible or of infinite total curvature.  
(The surface is reducible if and only if the monodromy of the 
secondary Gauss map can be simultaneously diagonalized.)  
Up to now there have been no known complete irreducible \cmcone{} surfaces 
in $H^3$ with finite total curvature and irregular ends.  

The purpose of this paper is to construct countably many $1$-parameter 
families of genus zero \cmcone{} surfaces with irregular ends and finite
total curvature, which have either dihedral or Platonic symmetries.  
For all the examples we produce, we show that they have finite total 
curvature and irregular ends.  
For the examples with dihedral symmetry and the simplest 
example with tetrahedral symmetry, we show irreducibility.  
Moreover, we construct a genus one \cmcone{} surface with four irregular
ends, which is the first known example with positive genus whose ends
are all irregular.  
\end{abstract}
\maketitle
\section*{Introduction}
Let $H^3$ denote the unique simply connected complete $3$-dimensional
Riemannian manifold with constant sectional curvature $-1$, which we
call the hyperbolic $3$-space.  
Associated to a complete finite-total-curvature \cmcone{}
(constant mean curvature one)
conformal immersion $f \colon{}M \to H^3$
of a Riemann surface $M$ are two meromorphic maps called the hyperbolic
Gauss map and the secondary Gauss map, which we denote by $G$ and $g$
respectively (to be defined in the next section).  
Using these two Gauss maps, we can define two characteristics of the
surface $f$:
\begin{enumerate}
\item It is known that $M$ is biholomorphic to a compact Riemann surface 
      with a finite number of points removed, and hence each end is
      conformally a punctured disk.  
      Therefore we may consider the order of the hyperbolic Gauss map
      $G$ at each end, and an end is called {\it regular\/} if $G$ has
      at most a pole singularity at this end.  
      If $G$ has an essential singularity, the end is called 
      {\it irregular}. 
\item Although $G$ is single-valued on $M$, the secondary Gauss map
      $g$ might be multi-valued on $M$, so we can have a nontrivial
      monodromy representation defined on the first fundamental group of
      $M$.  
      This monodromy group is a subgroup of $\SU(2)$, and if all members
      of this group can be diagonalized by the same conjugation, we say
      that the surface $f$ is {\it reducible}.  
      Otherwise, we say that $f$ is {\it irreducible}.  
      (Irreducibility depends on a global behavior of the surface but
      not on individual ends.)
\end{enumerate}
If a \cmcone{} immersion is reducible, the surface can be deformed
preserving its hyperbolic Gauss map $G$ and Hopf differential
($(2,0)$-part of the second fundamental form, see
Section~\ref{sec:prelim}).
On the other hand, an irreducible surface is the only surface with given
hyperbolic Gauss map and Hopf differential.

Recent progress in the theory of \cmcone{} surfaces in $H^3$ has led to
the discovery of many new examples of these surfaces.  
Many examples with regular ends are now known, and various properties of
these surfaces are understood. 
Bryant \cite{Bryant} found a local description for these surfaces in terms 
of holomorphic data that initiated this recent progress.  
The last two authors \cite{uy1}--\cite{uy7} developed the theory using
Bryant's description to find many examples and properties, and work in
this direction has been continued by Small \cite{Sm}, the authors 
\cite{ruy1}--\cite{ruy5}, Costa-Sousa Neto \cite{CN}, 
Earp-Toubiana \cite{ET1}--\cite{ET2}, Yu \cite{Yu1}--\cite{Yu3}, 
Levi-Rossman \cite{LR}, Barbosa-Berard \cite{BB}, do Carmo and Gomes and 
Lawson and Thorbergsson and Silveira \cite{CGT,CL,CS}, and others.  
Regarding properties of the ends of embedded examples, Collin, Hauswirth
and Rosenberg \cite{chr} have recently shown that any embedded \cmcone{}
surface of finite total curvature is either a horosphere or all of its
ends are asymptotic to catenoid cousin ends.  
In \cite{chr,Yu3}  it is further shown that any irregular end cannot
be embedded, and the limit points of such an end are dense at infinity.  
Recently, Pacard and Pimentel \cite{PP} established a method for
attaching small handles between tangent horospheres and deforming to
produce \cmcone{} surfaces, and this construction produces many embedded
\cmcone{} surfaces of any genus.  
Also, Karcher \cite{Kar} has recently constructed periodic \cmcone{}
surfaces with fundamental domains in several different types of compact
quotients of $H^3$.

A typical example of an irregular end is the end of the Enneper cousin,
a surface first constructed by Bryant \cite{Bryant}.  
After that the last two authors \cite{uy2} constructed examples of genus
zero and two irregular ends, and also many reducible \cmcone{} surfaces
of genus zero whose ends are all irregular, using deformations from
minimal surfaces.
(The conclusion of Remark 4.4 in \cite{uy2} contains an error.  
 The number of ends should be  $ml+2$, and hence the genus of 
 $\hat M_0^*$ is zero.) Recently, Daniel \cite{D} has investigated
  irregular ends
from the viewpoint of Nevanlinna theory.

After \cite{uy2}, no further surfaces with irregular ends and 
finite total curvature had been constructed. 
(However, such an example with infinite total curvature can be found in
\cite{ruy3}.)  
In particular, until now no irreducible \cmcone{} surfaces with
irregular ends and finite total curvature had been known.

The purpose of the paper is to construct countably many $1$-parameter 
families of genus zero \cmcone{} surfaces with irregular ends and finite
total curvature, which have either dihedral or Platonic symmetries.  
We further show that examples with dihedral symmetries, and the simplest 
example with tetrahedral symmetry, must be irreducible. 
 All of our examples have
irregular ends of finite type in the sense of Daniel \cite{D}.

To do the construction, we start with the meromorphic data for the genus 
zero irreducible \cmcone{} surfaces with regular ends found in
\cite{uy3} and \cite{ruy1} and modify this data to make surfaces with 
irregular ends.  
The spirit of the construction is similar to the construction of
trinoids in \cite{uy3}, where \cmcone{} surfaces with prescribed Gauss
maps are constructed by reflecting spherical triangles, and we use
monodromy killing arguments like in \cite{ruy1} and \cite{uy6}, but
the techniques are brought to bear more intricately here.  

In Section~\ref{sec:prelim} we give necessary preliminaries.
As our construction is done by reflecting abstract spherical triangles,
we discuss this in Section~\ref{sec:reflection}, and introduce a method
to construct \cmcone{} surfaces with irregular ends
(Theorem~\ref{thm:gen}), which is proved in
Section~\ref{sec:proof}.
As an application of the theorem, we construct examples of genus zero
with either dihedral or Platonic symmetries in Section~\ref{sec:platonic}.
Finally, in Section~\ref{sec:torus}, we construct a \cmcone{} surface of
genus $1$ with four irregular ends, which is the first known example
with positive genus whose ends are all irregular.
\section{Preliminaries}
\label{sec:prelim}
\subsection*{Null meromorphic curves.} 
Here we recall from \cite{uy1,uy7} some fundamental properties of null
meromorphic curves in $\SL(2,\C)$.
\begin{definition}
 Let $F\colon{}M \to \SL(2,\C)$ be a meromorphic map defined on a
 Riemann surface $M$ with a local complex coordinate $z$.  
 Then $F$ is called {\it null\/} if  $\det(F_z) \equiv 0$.  
 (This condition does not depend on the choice of coordinate $z$.) 
\end{definition}
Let $F\colon{}M \to \SL(2,\C)$ be a null meromorphic map. 
We define a matrix $\alpha$ by
\[
   \alpha=\begin{pmatrix} \alpha_{11} & \alpha_{12} \\
                        \alpha_{21} & \alpha_{22} 
                        \end{pmatrix}
         :=F^{-1}dF \; ,
\]
and set
\begin{equation}\label{eq:gomega}
   g:= \alpha_{11}/\alpha_{21}\;,\qquad
  \omega:=\alpha_{21} \; .
\end{equation}
Then the pair $(g,\omega)$ is a meromorphic function $g$ and a
meromorphic $1$-form $\omega$ on $M$ satisfying 
\begin{equation}\label{eq:ode0}
   F^{-1}dF= \begin{pmatrix} 
	      g & -g^2 \\
	      1 & -g\hphantom{^2}
	     \end{pmatrix} \, \omega \; .
\end{equation}
Conversely, let $g$ be a meromorphic function and $\omega$ a holomorphic
$1$-form on $M$. 
Then the ordinary differential equation \eqref{eq:ode0} is integrable
and the solution $F$ is a null map into $\SL(2,\C)$ (since we will always 
choose the initial condition to be in $\SL(2,\C)$) defined on the 
universal covering of $M\setminus\{\text{poles of $\alpha$}\}$.
In general, $F$ might not be single-valued on $M$ itself, and $F$ may have
essential singularities at poles of $\alpha$.  
We call the pair $(g,\omega)$ the {\it Weierstrass data\/} of $F$. 
\begin{definition}
 Let 
 \[
     F=\begin{pmatrix}F_{11} & F_{12} \\ F_{21} & F_{22}
     \end{pmatrix}
 \]
 be a null meromorphic map of $M$ into $\SL(2,\C)$ with 
 Weierstrass data $(g,\omega)$.
 We call 
 \[
      G:=\frac {dF_{11}}{dF_{21}}=\frac {dF_{12}}{dF_{22}}
 \]
 the {\it hyperbolic Gauss map\/} of $F$.
 Furthermore, we  call $g$ in \eqref{eq:gomega} 
 the {\it secondary Gauss map\/} and $Q=\omega\cdot dg$ 
 the {\it Hopf differential\/} of $F$.
\end{definition}
We remark that the secondary Gauss map $g$ satisfies 
\[
      g=-\frac {dF_{22}}{dF_{21}}=-\frac {dF_{12}}{dF_{11}} \; . 
\]

Let $F\colon{} M \to \SL(2,\C)$ be a null meromorphic map.
Then for $a,b\in \SL(2,\C)$, $\widetilde F =aFb^{-1}$ is also a null
meromorphic map.
Then the associated two Gauss maps $\widetilde G$, $\widetilde  g$, and
the Hopf differential $\widetilde Q$ of $\widetilde F$ are 
\begin{equation}\label{eq:three}
  \widetilde G=a\star G \; , \quad
  \widetilde g=b\star g \; , \quad
  \mbox{and}\quad
  \widetilde  Q=Q \; ,
\end{equation}
where, for any matrix $a=(a_{ij})\in\SL(2,\C)$ and any function $G$,
$a\star G$ is the M\"obius transformation of $G$ by $a$: 
\begin{equation}\label{eq:moebius}
  a\star G = \frac{a_{11}G+a_{12}}{a_{21}G+a_{22}} \; .  
\end{equation}

Let $z$ be a complex coordinate on a neighborhood $U$ of $M$.  
Now we consider the Schwarzian derivatives $S(G)$ and $S(g)$ on $U$ of
$G$ and $g$, where 
\begin{equation}\label{eq:sch}
  S(G)=
  \left[
  \left(\frac{G''}{G'}\right)'-\frac 12 \left(\frac{G''}{G'}\right)^2
  \right]\,dz^2 \qquad
  \left( '=\frac{d}{dz}\right).
\end{equation}
The description of the Schwarzian derivative depends on the choice of
complex coordinate $z$.  
However, any difference of two Schwarzian derivatives, as a meromorphic
$2$-differential, is independent of the choice of complex coordinate.
Note that the Schwarzian derivative is invariant under M\"obius
transformations:
\begin{equation}\label{eq:sch-inv}
  S(G)=S(a\star G) \qquad \bigl(a\in\SL(2,\C)\bigr).
\end{equation}
The following identity can be checked: 
\begin{equation}\label{S-rel}
   S(g)-S(G)=2Q\;.
\end{equation}
Conversely, the following lemma holds:%
\begin{lemma}[\cite{Sm,uy3,kuy}]\label{lem:unique}
 Let $(G,g)$ be a pair of meromorphic functions on $M$ such that
 $S(g)-S(G)$ is not identically zero. 
 Then there exists a unique {\rm (}up to sign{\rm )} null meromorphic
 map $F\colon{}M\to \SL(2,\C)$ such that $G$ and $g$ are the
 hyperbolic Gauss map and the secondary Gauss map of $F$.  
\end{lemma}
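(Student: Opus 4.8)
The plan is to reduce the statement to a problem about a second-order linear ODE, because the Schwarzian derivative is precisely the obstruction/invariant governing such equations. Given the pair $(G,g)$, set $Q := \tfrac12\bigl(S(g)-S(G)\bigr)$, a meromorphic $2$-differential which by hypothesis is not identically zero, so that \eqref{S-rel} is the relation we must realize. The idea is to recover the Weierstrass data $(g,\omega)$: we already have $g$, and we should take $\omega := Q/dg$, so that automatically $Q = \omega\cdot dg$ and $g = \alpha_{11}/\alpha_{21}$ in the notation of \eqref{eq:gomega}. Integrating the ODE \eqref{eq:ode0} with these data produces, on the universal cover of $M$ minus the poles of $\alpha$, a null meromorphic map $F$ into $\SL(2,\C)$, well-defined up to left multiplication by a constant in $\SL(2,\C)$ (the choice of initial condition). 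By construction its secondary Gauss map is $g$ and its Hopf differential is $Q$, hence by \eqref{S-rel} its hyperbolic Gauss map $\widetilde G$ satisfies $S(\widetilde G) = S(g) - 2Q = S(G)$; since $S(\widetilde G)=S(G)$ as meromorphic $2$-differentials, $\widetilde G$ and $G$ differ by a M\"obius transformation (two functions with the same Schwarzian are related by a constant element of $\PSL(2,\C)$). Composing $F$ on the left by a suitable constant $a\in\SL(2,\C)$ — using \eqref{eq:three}, which changes $\widetilde G$ to $a\star \widetilde G$ while leaving $g$ and $Q$ untouched — we arrange $a\star\widetilde G = G$, giving existence.

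For uniqueness, suppose $F_1,F_2\colon M\to\SL(2,\C)$ are null meromorphic maps both having hyperbolic Gauss map $G$ and secondary Gauss map $g$. By \eqref{S-rel} they then have the same Hopf differential $Q = \tfrac12(S(g)-S(G))$, so the same Weierstrass data $(g,\omega)$ with $\omega = Q/dg$, hence both solve \eqref{eq:ode0}; by uniqueness of solutions to a linear ODE they differ by left multiplication by a constant, $F_2 = a F_1$ for some $a\in\SL(2,\C)$. But then $g$ is unchanged (as it should be, by \eqref{eq:three}) and $G$ is replaced by $a\star G$, so $a\star G = G$ identically. A M\"obius transformation fixing a nonconstant meromorphic function pointwise must be the identity in $\PSL(2,\C)$, so $a = \pm I$, which is the claimed uniqueness up to sign.

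The one genuinely delicate point — and the main obstacle — is the single-valuedness of $F$ on $M$ itself: a priori the solution of \eqref{eq:ode0} lives only on the universal cover of $M$ minus the poles of $\alpha$, and we must know that it descends to a single-valued (meromorphic, possibly with essential singularities at poles of $\alpha$) map on all of $M$. The way to see this is that $G$ and $g$ are given as honest meromorphic functions on $M$: the monodromy of $F$ around any loop $\gamma$ is some constant $\rho(\gamma)\in\SL(2,\C)$ acting by $\rho(\gamma)\star(\cdot)$ on the hyperbolic Gauss map and by $(\cdot)\star\rho'(\gamma)$ on $g$; since both Gauss maps are already single-valued on $M$, both actions must be trivial, forcing $\rho(\gamma)=\pm I$, and the sign ambiguity is absorbed into the "up to sign" in the statement (or killed by passing to a double cover that is still $M$ because $F$ and $-F$ give the same surface data). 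This is exactly the argument behind the cited references \cite{Sm,uy3,kuy}, and one can alternatively phrase the whole construction intrinsically in terms of the second-order linear ODE $\psi'' + \tfrac12 S(G)\,\psi = 0$ whose ratio of solutions is $G$, which makes the $\SL(2,\C)$-valued primitive transparent. The remaining verifications — that $\omega=Q/dg$ is holomorphic where needed, that $\det F_z\equiv 0$, and that the degenerate case is genuinely excluded by $S(g)\not\equiv S(G)$ — are routine given the formulas \eqref{eq:gomega}–\eqref{S-rel}.
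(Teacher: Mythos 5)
The paper gives no proof of Lemma~\ref{lem:unique}: it is quoted from \cite{Sm,uy3,kuy}, where the standard arguments are either Small's explicit formula, which writes $F$ directly in terms of $G$, $g$ and $\sqrt{dG/dg}$ (\cite{Sm,kuy}), or an ODE argument in the spirit of \cite{uy3}. Your route --- set $Q=\tfrac12\bigl(S(g)-S(G)\bigr)$, $\omega=Q/dg$, integrate \eqref{eq:ode0}, use \eqref{S-rel} to get $S(\widetilde G)=S(G)$, adjust by a constant left factor via \eqref{eq:three}, and prove uniqueness by ``same Weierstrass data $\Rightarrow$ same ODE $\Rightarrow$ constant left factor $a$ with $a\star G=G$ $\Rightarrow$ $a=\pm\id$'' --- is the second kind of proof, and both the existence-on-the-universal-cover step and the uniqueness step are correct as you present them. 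What the explicit-formula proof buys is that the only multivaluedness is visibly the sign of one square root; your version isolates exactly the same issue as a monodromy computation.

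That monodromy point is the one place where your write-up goes wrong rather than being merely informal. Since the coefficient matrix of \eqref{eq:ode0} is single-valued on $M$, the deck transformations act only by left multiplication, $F\circ T=\rho(T)\,F$; this leaves the secondary Gauss map untouched (so single-valuedness of $g$ gives no constraint, contrary to your ``both actions'' remark), and single-valuedness of the hyperbolic Gauss map forces $\rho(T)=\pm\id$, as you say. But the sign $-\id$ cannot be argued away: it is not ``absorbed into the up-to-sign'' of the statement, and passing to a double cover does not return you to $M$. It genuinely occurs: for $M=\C\setminus\{0\}$, $g=z$, $G=z^2$ one has $\sqrt{dG/dg}=\sqrt{2z}$, and by your own uniqueness argument applied on the universal cover every null lift with these Gauss maps has monodromy $-\id$ around the puncture, so no single-valued $\SL(2,\C)$-valued $F$ exists on $M$ itself, only a $\PSL(2,\C)$-valued one. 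Thus what your argument (and Small's formula alike) actually proves is the lemma with $F$ defined on the universal cover, unique up to sign and with monodromy in $\{\pm\id\}$ --- which is exactly the form in which the paper uses such lifts (compare the remark after \eqref{eq:ode0} that $F$ ``might not be single-valued on $M$ itself''; note that $f=FF^*$, $G$, $g$, $Q$ are unaffected by the sign). To match the literal statement you must either take $M$ simply connected or read ``up to sign'' as including this $\Z_2$ monodromy; your proof should state this explicitly instead of claiming the sign can be killed.
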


Now, for later use, we point out the following elementary fact from
linear algebra: 
\begin{lemma}[\cite{ruy1}]\label{fact}
  A matrix $a\in \SL(2,\C)$ satisfies $a\bar a=\id$
  if and only if it is of the form
  \[
      a=\begin{pmatrix}       
	 p & i \gamma_1 \\
         i\gamma_2 & \bar p 
	\end{pmatrix}
      \qquad \text{with}\quad \gamma_1,\gamma_2\in \R \; , \quad 
      p\bar p+\gamma_1\gamma_2=1 \; , \quad i=\sqrt{-1} \; .  
  \]
\end{lemma}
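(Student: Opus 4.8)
For the "if" direction, suppose $a$ has the stated form. Then
\[
  \bar a = \begin{pmatrix} \bar p & -i\gamma_1 \\ -i\gamma_2 & p \end{pmatrix},
\]
since $\gamma_1,\gamma_2$ are real, and a direct multiplication gives
\[
  a\bar a = \begin{pmatrix} p\bar p + \gamma_1\gamma_2 & -ip\gamma_1 + i\gamma_1 p \\
  i\gamma_2\bar p - i\bar p\gamma_2 & \gamma_1\gamma_2 + p\bar p \end{pmatrix}
  = \begin{pmatrix} p\bar p + \gamma_1\gamma_2 & 0 \\ 0 & p\bar p + \gamma_1\gamma_2 \end{pmatrix},
\]
which equals $\id$ precisely because $p\bar p + \gamma_1\gamma_2 = 1$. (One should also note this condition is exactly $\det a = p\bar p + \gamma_1\gamma_2 = 1$, so the determinant constraint is automatically consistent.)

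For the "only if" direction, write $a = \begin{pmatrix} a_{11} & a_{12} \\ a_{21} & a_{22} \end{pmatrix}$ with $\det a = a_{11}a_{22} - a_{12}a_{21} = 1$. The condition $a\bar a = \id$ means $\bar a = a^{-1}$; since $a \in \SL(2,\C)$ we have $a^{-1} = \begin{pmatrix} a_{22} & -a_{12} \\ -a_{21} & a_{11} \end{pmatrix}$, so comparing entries of $\bar a = a^{-1}$ yields
\[
  \overline{a_{11}} = a_{22}, \qquad \overline{a_{22}} = a_{11}, \qquad
  \overline{a_{12}} = -a_{12}, \qquad \overline{a_{21}} = -a_{21}.
\]
The first two equations say $a_{22} = \overline{a_{11}}$ (and are redundant with each other), so set $p := a_{11}$, giving $a_{22} = \bar p$. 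The last two say $a_{12}$ and $a_{21}$ are purely imaginary, so write $a_{12} = i\gamma_1$ and $a_{21} = i\gamma_2$ with $\gamma_1,\gamma_2 \in \R$. Substituting into $\det a = 1$ gives $p\bar p - (i\gamma_1)(i\gamma_2) = p\bar p + \gamma_1\gamma_2 = 1$, which is exactly the stated condition.

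This is entirely routine; there is no real obstacle. The only point requiring minor care is to observe that the four scalar equations from $\bar a = a^{-1}$ are not independent — the relations $\overline{a_{11}} = a_{22}$ and $\overline{a_{22}} = a_{11}$ are complex conjugates of each other — so that after naming $p$, $\gamma_1$, $\gamma_2$ the only surviving constraint besides reality is the determinant condition $p\bar p + \gamma_1\gamma_2 = 1$.
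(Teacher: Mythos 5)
Your proof is correct: both directions are verified by the straightforward computation, and you rightly observe that the determinant condition $\det a = p\bar p+\gamma_1\gamma_2$ makes the constraint $p\bar p+\gamma_1\gamma_2=1$ consistent with membership in $\SL(2,\C)$. The paper itself offers no proof of this lemma---it is quoted as an elementary linear-algebra fact from \cite{ruy1}---and your argument is exactly the routine verification that citation stands in for, so there is nothing to add.
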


\subsection*{CMC-1 surfaces in $H^3$}
We identify the Minkowski 4-space $L^4$, which has the canonical
Lorentzian metric $(~\cdot~,~\cdot~)$ of signature $(-,+,+,+)$, 
with the space of $2\times 2$ hermitian matrices $\Herm(2)$.  
More explicitly, $(t,x_1,x_2,x_3)\in L^4$ is identified with the matrix
\[
  \begin{pmatrix}
   t+x_3 &       x_1+ix_2 \\
   x_1-ix_2&       t-x_3
  \end{pmatrix}
	\in \Herm(2) \; .  
\]
The hyperbolic $3$-space can be defined as the upper component 
\[ H^3=\{\xi=(t,x_1,x_2,x_3) \in L^4\,|\,(\xi,\xi)=-1 \,,\;t > 0\,\} \] 
of the hyperboloid in $L^4$ with the
induced metric.  
In $\Herm(2)$ this is represented as 
\[
  H^3=\{\,X\in \Herm(2)\,;\,\det X=1  \, ,\,\, \trace X >0\,\}
     =\{aa^*\,;\,a\in\SL(2,\C)\} \;,
\]
where $a^*={}^t\bar a$.
The complex Lie group $\SL(2,\C)$ acts isometrically on $H^3$ by
$\rho(a)x=a x a^*$, where $a\in \SL(2,\C)$ and $x\in H^3$. 

Let $M$ be a Riemann surface and $F\colon{}M\to \SL(2,\C)$ a null
holomorphic immersion.
Then $f=FF^*\colon{}M\to H^3$ is a conformal \cmcone{} immersion.
Conversely, for any conformal \cmcone{} immersion $f\colon{}M\to H^3$,
there exists a null holomorphic immersion $F\colon{}\widetilde M\to
\SL(2,\C)$ such that $f=FF^*$,
where $\widetilde M$ denotes the universal covering of $M$.
We call $F$ a {\it lift\/} of the conformal \cmcone{} immersion $f$.
Let $\widetilde F$ be another lift of $f$. 
Then, we have the expression $\widetilde F=Fb^{-1}$ for some
matrix $b\in \SU(2)$. 
Let $(g,\omega)$ be the Weierstrass data of the null map of the lift
$F$.
Then the first fundamental form $ds^2$ and the second fundamental form
$I\!I$ are given by (see \cite{uy1}, for example)
\begin{equation}\label{eq:first}
\begin{aligned}
 ds^2&=(1+|g|^2)^2 \omega\cdot\bar \omega\;, \\
 I\!I&=-Q-\overline Q + ds^2\;, 
\end{aligned}
\end{equation}
where $Q=\omega\cdot dg$ is the Hopf differential of $F$.

Let $f=FF^*\colon{}M\to H^3$ be a complete conformal \cmcone{}
immersion whose total Gaussian curvature is finite.
Since the Gaussian curvature $K$ of \cmcone{} surface is non-negative,
finiteness of the total Gaussian curvature is equivalent to
\[ 
   \int_M (-K)\, dA < \infty\;,
\]
where $dA$ is the area element with respect to the first fundamental
form.
Then there is a compact Riemann surface $\overline M$ and a 
finite number of points $\{p_1,$ $\dots,$ $p_n\}$ $\in \overline M$ such
that $M=\overline M\setminus \{p_1,\dots,p_n\}$. 
We call each $p_j$ an {\it end\/} of $f$.
The hyperbolic Gauss map $G$ on $M$ does not necessarily extend
meromorphically on $\overline M$. 
The end $p_j$ is called a {\it regular\/} end if $p_j$ is at most a pole
singularity of $G$, and otherwise is called an {\it irregular} end.  
Namely, an irregular end is an end at which the hyperbolic Gauss map
has an essential singularity.

On the other hand, the Hopf differential $Q$ can always be extended as a
meromorphic $2$-differential on $\overline M$.
We denote by $\ord_p Q$ the order of the first non-vanishing term
of the Laurent expansion of the Hopf differential $Q$ at 
$p \in \overline M$.
(By definition, $\ord_p Q>0$ at zeros of $Q$ and $\ord_p Q<0$ at poles 
of $Q$.)  
The following lemma is well-known 
(cf.~\cite{Bryant}, Lemma 2.3 of \cite{uy1}).
\begin{lemma}\label{lem:irreg}
 An end $p_j$ is regular if and only if $\ord_{p_j} Q \geq -2$.  
\end{lemma}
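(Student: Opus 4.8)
The plan is to localize the problem at a single end $p_j$ and choose a complex coordinate $z$ centered there, so that the punctured disk $0<|z|<\varepsilon$ models a neighborhood of $p_j$ in $M$. The key relation to exploit is \eqref{S-rel}, namely $S(g)-S(G)=2Q$, combined with the fact (stated above) that $Q$ extends meromorphically across $p_j$ on $\overline M$, while the secondary Gauss map $g$ has at worst a pole or an ordinary branch point there because its monodromy is unitary and it is, after passing to a suitable cover, a meromorphic map to $\CP^1$; in particular $S(g)$, being a Schwarzian of such a function, is a meromorphic $2$-differential whose order at $p_j$ is bounded below by $-2$ (a pole of order $k$ contributes a pole of order $2$ to the Schwarzian, and a branch point of order $k\geq 2$ contributes order $-2$ as well). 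Reading \eqref{S-rel} as $S(G)=S(g)-2Q$, the regularity of the end is equivalent to $S(G)$ having at worst a pole of order $2$ at $p_j$, and this happens exactly when $\ord_{p_j}Q\geq -2$, since then the right-hand side has order $\geq -2$ while if $\ord_{p_j}Q\leq -3$ the term $-2Q$ dominates and forces a pole of order $\geq 3$ in $S(G)$.

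The remaining point is to convert "$S(G)$ has at worst a pole of order $2$ at $p_j$" into "$G$ has at worst a pole at $p_j$", i.e.\ into regularity of the end in the sense defined above. For the forward direction this is immediate: if $G$ extends meromorphically (a pole being allowed), then by Möbius invariance \eqref{eq:sch-inv} we may assume $G$ is holomorphic near $p_j$, and the Schwarzian of a holomorphic function with $G'\not\equiv 0$ has at worst a pole of order $2$ at an isolated critical point, hence $\ord_{p_j}S(G)\geq -2$, which by \eqref{S-rel} gives $\ord_{p_j}Q\geq -2$. For the converse, assuming $\ord_{p_j}Q\geq -2$ one gets $\ord_{p_j}S(G)\geq -2$, and then one invokes the standard fact from the theory of the Schwarzian (a Fuchsian-type ODE argument): a meromorphic $2$-differential with a pole of order at most $2$ at $p_j$ is the Schwarzian of a multivalued function whose monodromy around $p_j$ is generated by a single Möbius transformation, and $G$ is a particular solution, obtained as a ratio of solutions of the associated second-order linear ODE $u''+\tfrac12 q\,u=0$ with $q$ having a double pole; such solutions have at most algebraic-logarithmic growth, and since $G$ is a priori single-valued on $M$, its monodromy around $p_j$ is trivial, which pins down the local exponents so that $G$ extends meromorphically across $p_j$.

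I would present the argument in this order: (1) fix the local coordinate and recall that $Q$ extends meromorphically; (2) establish the easy direction using \eqref{eq:sch-inv} and the elementary estimate on the Schwarzian of a holomorphic map at a critical point; (3) for the hard direction, show $\ord_{p_j}Q\geq -2\Rightarrow\ord_{p_j}S(G)\geq-2$, and then run the Fuchsian ODE / monodromy argument to upgrade this to meromorphic extension of $G$, using single-valuedness of $G$ on $M$ to kill the logarithmic term and force integer (indeed, the right) exponents.

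The main obstacle is step (3): controlling $g$ and $G$ near the end precisely enough. One must be careful that $g$ itself may be genuinely multivalued on $M$, so the claim "$S(g)$ has order $\geq -2$" needs the unitary-monodromy structure — after conjugating the local monodromy of $g$ into a diagonal or parabolic form one sees $g$ behaves like $z^{\mu}$ times a holomorphic unit (or like $\mu\log z$ plus holomorphic in the parabolic case), and in all these cases the Schwarzian indeed has a pole of order exactly $2$ or less; verifying this case analysis, and then matching it against the ODE analysis for $G$ to conclude meromorphic extension, is the technical heart of the proof. This is exactly the content cited to \cite{Bryant} and Lemma 2.3 of \cite{uy1}, so in the paper I would compress it to a reference together with the coordinate-level verification via \eqref{S-rel}.
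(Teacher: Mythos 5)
The paper offers no proof of this lemma at all---it is quoted as well-known, with a pointer to \cite{Bryant} and Lemma~2.3 of \cite{uy1}---so the comparison is with the standard argument of those references, whose skeleton is exactly what you outline: the identity \eqref{S-rel}, a bound $\ord_{p_j}S(g)\ge -2$ at the end, and the regular-singular-point (Fuchsian) analysis translating ``$\ord_{p_j}S(G)\ge-2$ plus single-valuedness of $G$'' into ``$G$ has at most a pole at $p_j$''. That skeleton, including the use of \eqref{eq:sch-inv} for the easy direction and the monodromy argument killing the logarithmic term in the hard direction, is correct.

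There is, however, a genuine gap at the step you yourself flag as the technical heart: you derive $\ord_{p_j}S(g)\ge -2$ from the unitarity of the monodromy of $g$, and that hypothesis is not sufficient. A multivalued function with diagonal unitary monodromy around the puncture need not be $z^{\mu}$ times a holomorphic unit: $g=z^{\mu}e^{1/z}$ has exactly the same monodromy, yet its Schwarzian has a pole of order $4$; likewise ``passing to a suitable cover'' does not make such a $g$ meromorphic across the puncture. (Also, the parabolic case you allow for cannot occur, since the monodromy lies in $\SU(2)$, whose nontrivial elements are elliptic.) What actually forces the local normal form $g=a\star\bigl(z^{\mu}\cdot(\text{unit})\bigr)$, and hence $\ord_{p_j}S(g)\ge-2$, is completeness of $ds^2$ together with finiteness of the total curvature: these imply that the pseudometric $d\sigma^2_f$ of \eqref{eq:g-metric} extends to $\overline M$ with a conical singularity at each end (Proposition~4 of \cite{Bryant}; see the appendix and the paragraph following the lemma in the paper). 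Note that this input is needed in \emph{both} directions of your argument---your forward implication also reads the order of $Q$ off \eqref{S-rel} using the same bound on $S(g)$---so the proof should begin by invoking the conical-singularity statement rather than the monodromy; with that repair, the rest of your outline goes through.
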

Now we set $d\sigma_f=(-K) ds^2$. Then $d\sigma^2_f$ is a pseudometric 
of constant curvature $1$ with conical singularities  
(see the appendix of this paper, and also Proposition~4 of \cite{Bryant}).
It follows from \eqref{eq:first} and the Gauss equation that 
\begin{equation} \label{eq:g-metric}
 d\sigma^2_f=\frac {4\,dg\cdot d\bar g}{(1+|g|^2)^2} \; . 
\end{equation}
Hence $d \sigma^2_f$ is the pull-back of the Fubini-Study metric
$d\sigma^2_0$ on $\CP^1$ induced by the secondary Gauss map
$g\colon{}\widetilde{M}^2\to \C\cup \{\infty\} =\CP^1$.
By \eqref{eq:first} and \eqref{eq:g-metric} we have 
\begin{equation}\label{eq:gauss}
   ds^2\cdot d\sigma^2_f= 4\,Q\cdot\overline{Q} \; . 
\end{equation}
In addition to having conical singularities at the ends $p_j$, 
the pseudometric $d\sigma_f^2$ also has a conical singularity at each
umbilic point $q \in M$ of $f$.  The conical order of $d\sigma_f^2$ at 
each point is defined in the appendix of this paper.  
Since $ds^2$ is positive definite at $q$, we have the following: 
\begin{lemma}
 At umbilic points, the conical order of $d\sigma^2_f$ equals the order
 of $Q$. 
\end{lemma}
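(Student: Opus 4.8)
The plan is to read everything off the identity \eqref{eq:gauss}, which expresses $d\sigma_f^2$ as the quotient of the $(2,2)$-form $4\,Q\overline Q$ by the first fundamental form $ds^2$. Fix an umbilic point $q\in M$ and a local complex coordinate $z$ centered at $q$; we may assume $f$ is not totally umbilic, since otherwise $d\sigma_f^2\equiv 0$ and there is nothing to prove. Since $f$ is an immersion, near $q$ the first fundamental form is a genuine Riemannian metric, so $ds^2=\rho\,|dz|^2$ with $\rho$ smooth and positive near $z=0$.

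Next I would verify that $Q$ is holomorphic at $q$ and that $\ord_qQ\ge 1$. For the first point, $d\sigma_f^2=(-K)\,ds^2$ has a bounded conformal factor near $q$ (both $-K$ and the conformal factor of $ds^2$ are bounded at an immersion point; alternatively, by \eqref{eq:g-metric}, $d\sigma_f^2$ is the pull-back by the meromorphic map $g$ of the Fubini-Study metric, whose conformal factor is locally bounded); hence by \eqref{eq:gauss} the conformal factor of $4\,Q\overline Q$ is bounded near $q$, which forces the meromorphic $2$-differential $Q$ to be holomorphic there. For the second point, $I\!I=-Q-\overline Q+ds^2$ by \eqref{eq:first}, so the trace-free part of the second fundamental form is $-Q-\overline Q$; therefore $q$ is an umbilic point exactly when $Q(q)=0$, i.e.\ $\ord_qQ=k$ for some integer $k\ge 1$.

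With this in hand the computation is immediate. Write $Q=h(z)\,dz^2$ with $h(z)=z^{k}h_0(z)$, where $h_0$ is holomorphic and $h_0(q)\neq 0$. Then \eqref{eq:gauss} gives
\[
 d\sigma_f^2=\frac{4\,Q\overline Q}{ds^2}
  =\frac{4\,|h_0(z)|^2}{\rho(z)}\,|z|^{2k}\,|dz|^2 ,
\]
and the factor $4|h_0|^2/\rho$ is smooth and positive near $q$. Writing $d\sigma_f^2=e^{2u}|dz|^2$, this says that $u-k\log|z|$ extends continuously (in fact real-analytically) over $z=0$, which by the definition of conical order given in the appendix means precisely that $d\sigma_f^2$ has a conical singularity of conical order $k=\ord_qQ$ at $q$.

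There is essentially no hard step here: once one observes that, at an interior point, the right-hand side of \eqref{eq:gauss} equals $Q\overline Q$ times a smooth positive factor, the statement is a one-line consequence. The only points that deserve a word are the two verifications above — that $Q$ is genuinely holomorphic at $q$ (so that $\ord_qQ$ is a well-defined nonnegative integer) and that umbilicity at $q$ is equivalent to the vanishing of $Q$ there — together with matching the resulting local form of the metric to the appendix's definition of conical order.
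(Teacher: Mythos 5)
Your argument is correct and is essentially the paper's own: the paper obtains the lemma directly from the identity \eqref{eq:gauss}, $ds^2\cdot d\sigma_f^2=4\,Q\cdot\overline{Q}$, together with the remark that $ds^2$ is positive definite at the umbilic point, which is exactly the computation you carry out (with the added, correct, verifications that $Q$ is holomorphic and vanishes at an umbilic). No discrepancy with the paper's proof.
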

\section{Reflections of an abstract spherical triangle}
\label{sec:reflection}

In this section, we introduce a method to construct \cmcone{} surface
with irregular ends.
In \cite{ruy1}, examples with regular ends are constructed from the 
holomorphic data $G$ (the hyperbolic Gauss map) and $Q$ (the Hopf
differential).
However, since the hyperbolic Gauss map has essential singularities
at irregular ends, it is hard to find an explicit expression of $G$ in
our case.
Thus, our construction is based on the secondary Gauss map $g$ and the
Hopf differential $Q$. 
Though $g$ is not a well-defined meromorphic function on the surface, 
the pseudometric $d\sigma^2_f$ as in \eqref{eq:g-metric} is a spherical
metric with conical singularities which is well-defined on the surface.
So, we start by constructing a spherical metric $d\sigma_f^2$ with 
conical singularities, using reflections of spherical triangles (see
\cite{uy3}).

\subsection*{Abstract spherical triangles}%
In this section, we consider abstract spherical triangles and their
extensions by reflection.  
First, we shall define abstract triangles.
We set
\[
   \Delta:=\{x+iy\in \C\,;\, x\ge 0,\,y\ge 0,\,x+y\le 1\}\;,
\]
and label each vertex $V_1$, $V_2$, $V_3$ of this closed triangular 
region $\Delta$ as in Figure~\ref{fig:triangle}.
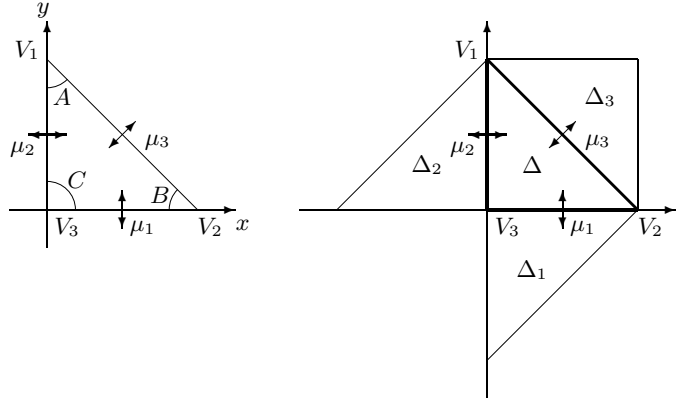
\begin{figure}
\setlength{\unitlength}{5mm}
\small
\begin{center}
\begin{tabular}{cc}
\begin{picture}(11,11)(-5,-5)
 \put(-1,0){\vector(1,0){6}}
 \put(0,-1){\vector(0,1){6}}
 \put(0,4){\line(1,-1){4}}
 \put(0.2,-0.2){\makebox(0,0)[lt]{$V_3$}}
 \put(4,-0.2){\makebox(0,0)[lt]{$V_2$}}
 \put(-0.2,4){\makebox(0,0)[rb]{$V_1$}}
 \put(0,0){\arc{1.5}{-1.5707963}{0}}
 \put(4,0){\arc{1.5}{-3.1415927}{-2.3561945}}
 \put(0,4){\arc{1.5}{-5.4977871}{-4.712389}}
 \put(0.8,0.8){\makebox(0,0)[cc]{$C$}}
 \put(3.0,0.4){\makebox(0,0)[cc]{$B$}}
 \put(0.4,3){\makebox(0,0)[cc]{$A$}}
 \put(5.2,-0.4){\makebox(0,0)[cc]{$x$}}
 \put(-0.1,5.3){\makebox(0,0)[cc]{$y$}}
 \put(2,0){\vector(0,1){0.5}}
 \put(2,0){\vector(0,-1){0.5}}
 \put(0,2){\vector(1,0){0.5}}
 \put(0,2){\vector(-1,0){0.5}}
 \put(2,2){\vector(1,1){0.35355339}}
 \put(2,2){\vector(-1,-1){0.35355339}}
 \put(2.2,-0.3){\makebox(0,0)[lt]{$\mu_1$}}
 \put(-0.3,1.8){\makebox(0,0)[rt]{$\mu_2$}}
 \put(2.6,2){\makebox(0,0)[lt]{$\mu_3$}}
\end{picture}
  &
\begin{picture}(11,11)(-5,-5)
 \put(-5,0){\vector(1,0){10}}
 \put(0,-5){\vector(0,1){10}}
 \Thicklines
  \put(0,4){\line(1,-1){4}}
  \put(0,0){\line(1,0){4}}
  \put(0,0){\line(0,1){4}}
 \thinlines
  \put(4,0){\line(0,1){4}}
  \put(0,4){\line(1,0){4}}
  \put(-4,0){\line(1,1){4}}
  \put(0,-4){\line(1,1){4}}
 \put(0.2,-0.2){\makebox(0,0)[lt]{$V_3$}}
 \put(4,-0.2){\makebox(0,0)[lt]{$V_2$}}
 \put(-0.2,4){\makebox(0,0)[rb]{$V_1$}}
 \put(2,0){\vector(0,1){0.5}}
 \put(2,0){\vector(0,-1){0.5}}
 \put(0,2){\vector(1,0){0.5}}
 \put(0,2){\vector(-1,0){0.5}}
 \put(2,2){\vector(1,1){0.35355339}}
 \put(2,2){\vector(-1,-1){0.35355339}}
 \put(2.2,-0.3){\makebox(0,0)[lt]{$\mu_1$}}
 \put(-0.3,1.8){\makebox(0,0)[rt]{$\mu_2$}}
 \put(2.6,2){\makebox(0,0)[lt]{$\mu_3$}}
 \put(1.2,1.2){\makebox(0,0)[cc]{$\Delta$}}
 \put(-1.6,1.2){\makebox(0,0)[cc]{$\Delta_2$}}
 \put(1.2,-1.6){\makebox(0,0)[cc]{$\Delta_1$}}
 \put(3,3){\makebox(0,0)[cc]{$\Delta_3$}}
\end{picture}
\end{tabular}
\end{center}
    \caption{The triangle $\Delta$ and its reflections.}
    \label{fig:triangle}
\end{figure}            
An {\it abstract spherical triangle\/} is a pair $(\Delta,d\sigma^2)$,
where $d\sigma^2$ is a Riemannian metric defined on $\Delta$ with
constant curvature $1$ such that the three edges forming the 
boundary $\partial \Delta$ are geodesics. 
Let $A$, $B$ and $C$ be the interior angles of $\Delta$ at the
vertices $V_1$, $V_2$, and $V_3$ with respect to the metric $d\sigma^2$
respectively.
The angles $A$, $B$ and $C$ are positive, but may take any positive 
values, including those greater than or equal to $\pi$.  
Moreover, if $A,B,C\not\in \pi \Z$, then we call the abstract spherical
triangle $(\Delta,d\sigma^2)$ {\it non-degenerate}. 
The following fact is known:
\begin{lemma}[\cite{uy7}]
\label{lem:trig}
 Let $(\Delta, d\sigma^2)$ be a non-degenerate abstract spherical
 triangle, then the three angles $A$, $B$, $C$ satisfy the inequality
 \begin{equation}\label{eq:tri}
    \cos^2 A +\cos^2 B + \cos^2 C  +
               2\cos A\cos B \cos C < 1\;.
 \end{equation} 
 Conversely, if a triple of positive real numbers $(A,B,C)$ satisfies
 \eqref{eq:tri}, then there exists a unique non-degenerate abstract
 spherical triangle such that the angles at $V_1$, $V_2$ and $V_3$ are
 $A$, $B$ and $C$ respectively.
\end{lemma}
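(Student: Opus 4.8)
The plan is to reformulate the inequality \eqref{eq:tri} as the positivity of a $3\times 3$ Gram determinant and to realize the abstract triangle by developing it onto the unit sphere $S^2\subset\R^3$. For positive numbers $(A,B,C)$ put
\[
  N(A,B,C):=\begin{pmatrix}
       1 & -\cos C & -\cos B \\
      -\cos C & 1 & -\cos A \\
      -\cos B & -\cos A & 1
  \end{pmatrix},
\]
so that expanding the determinant gives $\det N=1-\cos^2A-\cos^2B-\cos^2C-2\cos A\cos B\cos C$; thus \eqref{eq:tri} is exactly the condition $\det N>0$. Note also that \eqref{eq:tri} forces $A,B,C\notin\pi\Z$, since e.g.\ $C\in\pi\Z$ gives $\cos C=\pm1$ and then the left-hand side of \eqref{eq:tri} equals $1+(\cos A\pm\cos B)^2\ge1$; hence in the converse direction the triangle produced is automatically non-degenerate.

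\emph{Necessity.} Let $(\Delta,d\sigma^2)$ be a non-degenerate abstract spherical triangle. Since $\Delta$ minus its vertices is simply connected and $d\sigma^2$ has curvature $1$, there is a developing map $\Phi$ into $S^2$, a local isometry (extending continuously over the vertices), carrying each geodesic edge $\mu_i$ into a great circle $\Gamma_i=\{x\in S^2:\langle x,\mathbf u_i\rangle=0\}$ for a unit normal $\mathbf u_i$. At the vertex $V_3=\mu_1\cap\mu_2$, writing $v_3=\Phi(V_3)$, the two unit edge-tangents there are $\pm(\mathbf u_1\times v_3)$ and $\pm(\mathbf u_2\times v_3)$; as $\langle\mathbf u_1\times v_3,\mathbf u_2\times v_3\rangle=\langle\mathbf u_1,\mathbf u_2\rangle$ (because $v_3\perp\mathbf u_1,\mathbf u_2$ and $|v_3|=1$), the angle at $V_3$ being $C$ forces $\langle\mathbf u_1,\mathbf u_2\rangle=\varepsilon_{12}\cos C$ for a sign $\varepsilon_{12}$, and similarly $\langle\mathbf u_1,\mathbf u_3\rangle=\varepsilon_{13}\cos B$, $\langle\mathbf u_2,\mathbf u_3\rangle=\varepsilon_{23}\cos A$. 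The unit tangent field $x\mapsto\mathbf u_i\times x$ runs monotonically along $\Gamma_i$, so the two inward edge-tangents of $\mu_i$ at its endpoints have opposite signs relative to it; multiplying this relation over the three edges gives $\varepsilon_{12}\varepsilon_{13}\varepsilon_{23}=-1$, so after replacing some $\mathbf u_i$ by $-\mathbf u_i$ the Gram matrix of $\mathbf u_1,\mathbf u_2,\mathbf u_3$ is exactly $N(A,B,C)$. As a Gram matrix it is positive semidefinite, so $\det N\ge 0$; and $\mathbf u_1,\mathbf u_2,\mathbf u_3$ are linearly independent, because two cannot be parallel (else two edges would meet at an angle in $\pi\Z$, against non-degeneracy) and the three cannot be coplanar (else the $\Gamma_i$ would be concurrent, forcing $\Phi$ to identify two vertices, which a short developing-map argument excludes for a genuine triangle). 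Hence $\det N>0$, which is \eqref{eq:tri}.

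\emph{Sufficiency and uniqueness.} Conversely, assume \eqref{eq:tri}. Then $N=N(A,B,C)$ has positive leading principal minors $1$, $1-\cos^2C$, $\det N$, hence is positive definite, so there are linearly independent unit vectors $\mathbf u_1,\mathbf u_2,\mathbf u_3\in\R^3$ with Gram matrix $N$. Set $\Gamma_i=\mathbf u_i^{\perp}\cap S^2$ and let $v_i\in S^2$ be the unit vector along the dual-basis vector $\mathbf u^{i}$, so $v_1\in\Gamma_2\cap\Gamma_3$, $v_2\in\Gamma_3\cap\Gamma_1$, $v_3\in\Gamma_1\cap\Gamma_2$ (and $\langle v_i,\mathbf u_i\rangle>0$). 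When $A,B,C<\pi$, the pairwise spherical distances $\arccos\langle\mathbf u_i,\mathbf u_j\rangle$ equal $\pi-A,\pi-B,\pi-C\in(0,\pi)$ and satisfy the spherical triangle inequalities (equivalent to positive definiteness of $N$), so $\mathbf u_1,\mathbf u_2,\mathbf u_3$ are the vertices of a spherical triangle whose polar dual is an embedded spherical triangle with vertices $v_1,v_2,v_3$ and interior angles exactly $A,B,C$; pulling back the round metric by a homeomorphism of $\Delta$ onto this triangle gives the desired $d\sigma^2$. In general one builds the developing map $\Phi\colon\Delta\to S^2$ directly, sending $V_i$ to $v_i$ and $\mu_i$ to the geodesic arc of $\Gamma_i$ joining the appropriate pair of $v_j$, the arcs chosen so that the angles at the $v_i$ are $A,B,C$ (consistency is exactly the sign computation above, read in reverse), and sets $d\sigma^2=\Phi^*(\text{round metric})$; equivalently, one may split $\Delta$ by a geodesic through a vertex whose angle is $\ge\pi$ into two triangles with all angles $<\pi$, apply the embedded case, and glue. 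Uniqueness holds because the developed configuration --- three great circles meeting at prescribed angles through prescribed vertices --- is determined by $(A,B,C)$ up to an isometry of $S^2$.

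The step I expect to be the main obstacle is the orientation bookkeeping that pins $\varepsilon_{12}\varepsilon_{13}\varepsilon_{23}=-1$ (equivalently, that the coefficient of $\cos A\cos B\cos C$ in \eqref{eq:tri} is $+2$ and not $-2$), together with the treatment in the converse of triangles having an angle $\ge\pi$, where the developing map is not injective and one must argue via developing maps or a subdivision-and-gluing argument rather than directly inside $S^2$.
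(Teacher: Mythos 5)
Your Gram--matrix framing is sound, and the sign bookkeeping you single out as the expected obstacle does work out (each edge contributes opposite signs at its two endpoints, giving $\varepsilon_{12}\varepsilon_{13}\varepsilon_{23}=-1$, and the flip group then normalizes the Gram matrix to $N(A,B,C)$). The genuine gap in the necessity direction is instead the strictness of the inequality, i.e.\ ruling out linear dependence of $\mathbf u_1,\mathbf u_2,\mathbf u_3$. Coplanarity of the normals means the three developed edge circles pass through a common antipodal pair, and you dismiss this with the unproved assertion that ``a short developing-map argument excludes'' two vertices being identified by $\Phi$. But the angles of an abstract spherical triangle may be $\ge\pi$ (the paper's examples use $A=\pi m+\tfrac23\pi$, $C=\tfrac32\pi$, etc.), so the developing map is in general far from injective, and there is no local obstruction to all three vertices developing into a single antipodal pair: concurrency only forces relations of the form $\pm A\pm B\pm C\in\pi\Z$, which are perfectly compatible with non-degeneracy. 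Excluding this configuration is exactly the irreducibility statement the paper imports from \cite{uy7} (Corollary~3.2 there, applied to the doubled metric on $S^2$ with cone angles $2A,2B,2C$); its proof is global --- for instance, if the double were reducible one gets a single-valued meromorphic $1$-form $dg/g$ on $S^2$ with three simple poles and no zeros, contradicting degree $-2$ --- and your proposal contains no substitute for it. This is the heart of the lemma, not a footnote.

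The converse is also thinner than it must be precisely where angles are $\ge\pi$. Splitting ``by a geodesic through a vertex whose angle is $\ge\pi$ into two triangles with all angles $<\pi$'' already fails for an angle $\ge 2\pi$; more importantly, since you are proving existence you must assemble the triangle by gluing smaller ones, which requires choosing the subdivided angles so that both pieces satisfy \eqref{eq:tri} and so that the common edge has the same length in both pieces --- a matching condition you never address --- while the alternative of ``building $\Phi$ directly'' presupposes an immersed disk filling of the prescribed boundary, which is the object to be constructed. Finally, uniqueness does not follow from the circle configuration being determined up to isometry: one must also pin down the choice of vertex within each antipodal pair, exclude edges that wrap by extra multiples of $2\pi$, and show the filling (the metric on $\Delta$) is unique. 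In the paper all of this is delegated, via the doubling trick, to the existence-and-uniqueness theorem for irreducible metrics with three conical singularities (\cite{uy7}, Theorem~2.4). So your route through direct spherical geometry is genuinely different from the paper's, but as written it assumes the hard steps (strictness, existence for large angles, uniqueness) rather than proving them.
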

\begin{proof}[Proof\/\footnotemark{}.]
\footnotetext{
   Recently, an alternative proof and a geometric explanation of 
   this lemma were given in \cite{FH} and \cite{F}.
}
 We take a double (two identical copies) of $(\Delta, d\sigma^2)$ and
 glue them along their corresponding vertices and edges.  
 Then we get a conformal pseudometric on $S^2$ with three conical
 singularities with conical angles $2A$, $2B$, $2C$. 
 Consequently, the conical orders are
\[
 \frac{A-\pi}\pi \; ,\quad \frac{B-\pi}\pi \; ,\quad \frac{C-\pi}\pi
\]
 respectively.  
 If $(\Delta, d\sigma^2)$ is non-degenerate, then $A,B,C\not \in \pi \Z$. 
 By Corollary~3.2 of \cite{uy7}, the metric on $S^2$ is irreducible. 
 Then \eqref{eq:tri} follows from (2.19) of \cite{uy7}.

 Conversely, suppose that \eqref{eq:tri} holds.
 By Theorem 2.4 of \cite{uy7}, there exists a unique conformal
 pseudometric on $S^2$ with three conical singularities with conical
 angles $2A, 2B, 2C\not\in 2\pi \Z$.  
 The uniqueness of this pseudometric implies that it has a symmetry and 
 can be considered as a gluing of two identical non-degenerate spherical
 triangles.  
\end{proof}
The above lemma implies that a non-degenerate abstract spherical 
triangle is uni\-quely determined by its angles $A,B,C$.  
So we denote it by 
\[
    \Trig(A,B,C):=(\Delta,d\sigma^2).
\]
Now we fix a non-degenerate abstract spherical triangle
$\Trig(A,B,C)$. 
Since $\Delta$ is simply connected, there exists a meromorphic function
$g\colon{}\Delta\to\C\cup\{\infty\}$ such that the pull-back of the
Fubini-study metric on $\C\cup\{\infty\}=\CP^1$ by $g$ is
$d\sigma^2$.
However, such a choice of the developing map has an ambiguity up to an
$\SU(2)$-matrix action $g\mapsto a\star g$ ($a\in\SU(2)$).
We shall now remove this ambiguity, using a normalization:
There exists a unique (up to sign) developing map 
\[
   g=g_{A,B,C}:\Delta\longrightarrow \C\cup\{\infty\}
\]
of $d\sigma^2$ satisfying (see Figure \ref{fig:triangle})
\begin{equation}\label{eq:normalize}
   e^{-i C}g(V_1)\in \R\cup\{\infty\}\;,\quad
   g(V_2)\in \R\cup\{\infty\}\quad
   \text{and}\quad
   g(V_3)=0\;.
\end{equation}
We call the developing map $g=g_{A,B,C}$ the 
{\it normalized developing map\/} of the triangle $\Trig(A,B,C)$.  

Let $\mu_j$ ($j=1,2,3$) be the reflections of the triangle
$\Trig(A,B,C)$ across the three edges, as in Figure~\ref{fig:triangle}.
Let $\Delta_j$ be the closed domain obtained by reflecting $\Delta$ with
respect to $\mu_j$ (see Figure~\ref{fig:triangle}).
Then each reflection $\mu_j$ can be regarded as an involution on the
domain $\Delta\cup\Delta_j$.  
\begin{lemma}[Monodromy principle]\label{lem:p-I}
 Let $\Trig(A,B,C)$ be a non-degenerate abstract spherical triangle
 and $g_{A,B,C}:\Delta \to \C\cup \{\infty\}$ $(j=1,2,3)$  be the
 normalized developing map of $\Trig(A,B,C)$.
 Then the following identities hold\/{\rm :} 
 \begin{align*}
  \overline{ g_{A,B,C}\circ \mu_1}&=g_{A,B,C}\;,\\
  \overline{ g_{A,B,C}\circ \mu_2}&=e^{-2iC}g_{A,B,C}\;,\\
  \overline{ g_{A,B,C}\circ \mu_3}&=
  \begin{pmatrix}
                q & i \delta \\
               i\delta & \bar q
  \end{pmatrix} \star g_{A,B,C}
      \qquad (\delta\in \R, \quad q\bar q+\delta^2=1)\;,
 \end{align*}  
 where
 \begin{equation}\label{eq:q-value}
     q =  \frac{i}{\sin C}(\cos A + e^{iC}\cos B)\;.
 \end{equation}
\end{lemma}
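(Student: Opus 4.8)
The plan is to exhibit each $\overline{g_{A,B,C}\circ\mu_j}$ as a second developing map of the curvature-$1$ metric $d\sigma^2$ of $\Trig(A,B,C)$ — hence as a M\"obius transform $b_j\star g_{A,B,C}$ — and then to determine $b_j$ by locating the great circle $g_{A,B,C}(e_j)\subset\CP^1$. Write $g=g_{A,B,C}$, let $e_j$ be the edge of $\Delta$ pointwise fixed by $\mu_j$ (so $e_1=V_2V_3$ lies on the $x$-axis, $e_2=V_1V_3$ on the $y$-axis, and $e_3=V_1V_2$ on the line $x+y=1$), and let $c\colon w\mapsto\bar w$ denote complex conjugation, an orientation-reversing isometry of the Fubini--Study sphere $(\CP^1,d\sigma_0^2)$. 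Since $d\sigma^2$ has curvature $1$ and each $e_j$ is a $d\sigma^2$-geodesic, $g$ maps $e_j$ into a geodesic of $(\CP^1,d\sigma_0^2)$, i.e.\ a great circle $\Gamma_j$; by the Schwarz reflection principle for $\CP^1$-valued holomorphic maps across a geodesic, $g$ extends holomorphically to $\Delta\cup\Delta_j$ with $g\circ\mu_j=R_j\circ g$, where $R_j$ is the reflection of $\CP^1$ across $\Gamma_j$. Composing with $c$ gives $\overline{g\circ\mu_j}=(c\circ R_j)\circ g$, and as $c\circ R_j$ is a holomorphic isometry of $d\sigma_0^2$ (a product of two orientation-reversing isometries) it lies in $\PSU(2)$; choosing a lift $b_j\in\SU(2)$ yields $\overline{g\circ\mu_j}=b_j\star g$. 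Conjugating this, composing once more with $\mu_j$, and using $\mu_j\circ\mu_j=\id$ gives $(\bar b_j b_j)\star g=g$, so $b_j\bar b_j=\pm\id$; the value $-\id$ would force $R_j$ to be the fixed-point-free antipodal map rather than a reflection, so in fact $b_j\bar b_j=\id$, and Lemma~\ref{fact} shows $b_j$ has the asserted shape $\left(\begin{smallmatrix}p&i\gamma_1\\i\gamma_2&\bar p\end{smallmatrix}\right)$ with $\gamma_1,\gamma_2\in\R$, $p\bar p+\gamma_1\gamma_2=1$. It then remains to identify $R_j$, equivalently $\Gamma_j$, for each $j$.

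For $j=1,2$ this is read off from the normalization. Near $V_3=0$ the metric $d\sigma^2$ has interior angle $C$ while $\Delta$ has corner angle $\pi/2$, so the developing map has normal form $g(z)=\lambda z^{2C/\pi}\bigl(1+o(1)\bigr)$ for some $\lambda\in\C\setminus\{0\}$; hence $g$ maps $e_1$ into the line through $0$ in the direction $\lambda$ and $e_2$ into the line through $0$ in the direction $\lambda e^{iC}$, so $\Gamma_1,\Gamma_2$ are precisely these two lines. Since $g(V_2)\in\R\cup\{\infty\}$ lies on $\Gamma_1$, the normalization \eqref{eq:normalize} forces $\arg\lambda\in\{0,\pi\}$; replacing $g$ by $-g$, which again satisfies \eqref{eq:normalize}, we may take $\lambda>0$. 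Then $\Gamma_1=\R\cup\{\infty\}$, so $R_1=c$, $c\circ R_1=\id$, $b_1=\id$, and $\overline{g\circ\mu_1}=g$; and $\Gamma_2$ is the line $\arg w=C$, so $R_2\colon w\mapsto e^{2iC}\bar w$, $c\circ R_2$ is $w\mapsto e^{-2iC}w$, $b_2=\left(\begin{smallmatrix}e^{-iC}&0\\0&e^{iC}\end{smallmatrix}\right)$, and $\overline{g\circ\mu_2}=e^{-2iC}g$. (Both $b_1,b_2$ have the form of Lemma~\ref{fact}, with $\gamma_1=\gamma_2=0$.)

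The substantive step is $j=3$. By the previous paragraph $g(V_2)=r_2$ and $g(V_1)=r_1e^{iC}$ with $r_1,r_2\in\R_{>0}$, and the developing image $g(\Delta)$ is a spherical triangle with vertices $0$, $r_2$, $r_1e^{iC}$ and interior angles $C$, $B$, $A$ at them. Since the $d\sigma_0^2$-distance from $0$ to $r>0$ is $2\arctan r$, the spherical law of cosines gives $\cos(2\arctan r_2)=(\cos A+\cos B\cos C)/(\sin B\sin C)$ and $\cos(2\arctan r_1)=(\cos B+\cos A\cos C)/(\sin A\sin C)$, determining $r_1,r_2$ in terms of $A,B,C$. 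Now $\Gamma_3=g(e_3)$ is the great circle through $r_2$ and $r_1e^{iC}$, and, writing $b_3=\left(\begin{smallmatrix}q&i\delta\\i\delta&\bar q\end{smallmatrix}\right)$ with $q\bar q+\delta^2=1$ (from the first paragraph), the reflection across $\Gamma_3$ is $R_3(w)=\overline{b_3\star w}$. Imposing that $R_3$ fixes the two points $r_2$ and $r_1e^{iC}$ yields two scalar equations; solving them for $q,\delta$ and inserting the values of $r_1,r_2$ should give, after simplification, $\Re q=-\cos B$ and $\Im q=(\cos A+\cos B\cos C)/\sin C$ — i.e.\ $q=\tfrac{i}{\sin C}(\cos A+e^{iC}\cos B)$ — together with $\delta^2=1-|q|^2=\sin^{-2}C\,\bigl(1-\cos^2A-\cos^2B-\cos^2C-2\cos A\cos B\cos C\bigr)$, which is positive exactly by the non-degeneracy inequality \eqref{eq:tri} (a reassuring check). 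I expect the main obstacle to be precisely this computation: locating $\Gamma_3$ needs the positions of $g(V_1),g(V_2)$ via spherical trigonometry, and extracting $b_3$ in the stated closed form requires care with the various signs (of $\lambda$, of the $\SU(2)$ lifts, and of $\delta$). The remaining ingredients — Schwarz reflection, the use of Lemma~\ref{fact}, and the cases $j=1,2$ — are routine once the local normal form of $g$ at $V_3$ and the normalization \eqref{eq:normalize} are available.
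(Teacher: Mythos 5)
Your treatment of $\mu_1$ and $\mu_2$, and the structural frame (Schwarz reflection across a geodesic edge, $\overline{g\circ\mu_j}=b_j\star g$ with $b_j\in\SU(2)$, $b_j\bar b_j=\id$, Lemma~\ref{fact}), coincides with the paper's argument and is fine. The gap is the third identity, which is the actual content of the lemma: you never derive \eqref{eq:q-value}, but defer it to a computation that ``should give, after simplification'' the stated $q$, and the computation as planned would not go through in the generality required. Your plan places the developed vertices at $0$, $r_2>0$ and $r_1e^{iC}$ with $r_1,r_2>0$ finite and then invokes the classical spherical law of cosines for the geodesic triangle with these vertices and interior angles $C,B,A$. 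But a non-degenerate abstract spherical triangle only requires $A,B,C\in\R^+\setminus\pi\Z$; the angles may well exceed $\pi$ (in this paper they must: e.g.\ $A=\pi(m+1)-\pi/n$ in the dihedral examples and $A=\pi m+\tfrac23\pi$ in the tetrahedral ones), the developing image is then not an embedded triangle, the edge $V_3V_2$ can wrap through $\infty$ so $g(V_2)$ need only lie in $\R\cup\{\infty\}$ (not in $\R_{>0}$), and the textbook law of cosines with $d(0,r)=2\arctan r$ no longer applies as stated. So the step that pins down $\Gamma_3$, and hence $q$ and $\delta$, is both unexecuted and unjustified precisely in the cases the lemma is used for.

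The paper sidesteps this by not computing $\Gamma_3$ directly: after obtaining $\overline{g\circ\mu_3}=(\bar a^{-1}a)\star g$ for a rotation $a\in\SU(2)$ centered at $g(V_2)$, it doubles $\Trig(A,B,C)$ to get a curvature-$1$ metric on $S^2$ with conical angles $2A,2B,2C$ and quotes the monodromy computation for such three-singularity metrics in \cite{uy7} (bottom of p.~82), valid for arbitrary non-integer multiples of $\pi$, to read off that $\bar a^{-1}a$ has the form $\bigl(\begin{smallmatrix}q&i\delta\\ i\delta&\bar q\end{smallmatrix}\bigr)$ with $q$ as in \eqref{eq:q-value}. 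To complete your route you would either need to reprove that monodromy formula for arbitrary angles (e.g.\ by an analytic continuation or hypergeometric-type argument, not by naive spherical trigonometry), or cite \cite{uy7} as the paper does. Your check that $\delta^2=1-|q|^2$ is positive exactly when \eqref{eq:tri} holds is correct, but it is a consistency check, not a derivation of $q$.
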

\begin{proof}
 To simplify the notation, we set $g=g_{A,B,C}$.
 Let $\gamma_1$, $\gamma_2$ and $\gamma_3$ be the 
 three edges of $\Delta$
 which are stabilized by the reflections $\mu_1$, $\mu_2$ and $\mu_3$,
 respectively. 
 Since the edge $\gamma_1$ is a geodesic, \eqref{eq:normalize} implies
 that $g(\gamma_1)$ lies on the real axis.
 Then by the reflection principle, $\overline{g\circ\mu_1}=g$ holds.
 Similarly, by \eqref{eq:normalize}, $e^{-iC}g(\gamma_2)$ is real.
 Hence 
 \[
    \overline{e^{-iC}g\circ\mu_2}=e^{-iC}g
 \]
 holds.
 Then we have the second assertion.  

 There exists a rotation $a$ centered at $g(V_2)$ of the unit 2-sphere 
 $S^2(=\C\cup\{\infty\})$ such that the image of $g(V_1)$ is real.  
 Such an isometry $a$ of $S^2$ can be represented as a matrix 
 $a\in \SU(2)$. 
 Then we have $a\star g(\gamma_3)$ lies on the real axis.
 Hence by the reflection principle,
 $\overline{a\star g\circ\mu_3}  = a \star g$ holds, and then we have
 \[
     \overline{g\circ\mu_3} =
        \left(\bar a^{-1}a\right)\star g\;.
 \]
 In particular, we have
 \begin{equation}\label{eq:easy}
   \overline{ g\circ \mu_1}=g,\quad
   \overline{ g\circ \mu_2}=e^{-2iC}g,\quad
   \overline{ g\circ \mu_3}=(\bar a^{-1}a)\star g\;.
 \end{equation}
 Now we glue $\Trig(A,B,C)$ to a double of $\Trig(A,B,C)$ along 
 corresponding edges and vertices, giving us 
 a constant curvature one metric of three conical singularities
 on $S^2$ with conical angles $2A$, $2B$, and $2C$ just as 
 in the proof of Lemma \ref{lem:trig}.  
 The open domains $\Delta_1$, $\Delta_2$ and $\Delta_3$ can be regarded
 as the interior of the second triangle in the double 
 of $\Trig(A,B,C)$.  
 The monodromy of reflections for such metrics on $S^2$ are determined
 in \cite{uy7}.
 Then, as shown at the bottom of page~82 of \cite{uy7},  we have
 \[
   \bar a^{-1}a=
   \begin{pmatrix}
    q & i \delta \\
    i\delta & \bar q
   \end{pmatrix},
   \qquad (\delta\in \R,\, q\bar q+\delta^2=1)
 \]
 with $q$ as in \eqref{eq:q-value}.
\end{proof}
\subsection*{Closed Riemann surfaces generated by three reflections}
Let $\overline M$ be 
a closed Riemann surface and $D(\subset \overline{M})$
a simply connected closed domain bounded by three real analytic curves
$\gamma_1$, $\gamma_2$ and $\gamma_3$.
We label the vertices $V_1$, $V_2$, $V_3$ of this closed 
triangular region 
$D$ such that $\gamma_1$, $\gamma_2$, and $\gamma_3$ correspond to the
three edges $V_2V_3$, $V_3V_1$ and $V_1V_2$, respectively.
The Riemann surface $\overline M$ is generated by $D$ if there are three
anti-holomorphic reflections $\mu_1$, $\mu_2$, $\mu_3$ of $\overline M$
stabilizing the three edges $\gamma_1$, $\gamma_2$, and $\gamma_3$ of 
$D$ such that any point of 
$\overline M$ can be contained in the image of $D$ by a suitable finite
composition of these three reflections (see Figure~\ref{fig:domain}).  
\begin{figure}
\begin{center}
\includegraphics[width=4cm]{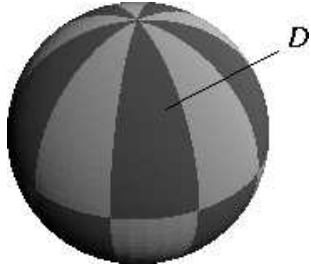}
\end{center}
\caption{A Riemann surface generated by reflections:
         Here the Riemann surface is the sphere $S^2$, and is obtained 
         from the triangle $D$ by reflections.}
\label{fig:domain}
\end{figure}
In this case, $D$ is called a {\it fundamental domain\/} of 
$\overline M$.
A meromorphic $2$-differential $Q$ on $\overline M$ is said to be
symmetric with respect to $D$ if
\[
     \overline{Q\circ \mu_j}=Q\qquad (j=1,2,3)
\]
holds. 

We let $\metone(\overline M)$ denote the set of conformal pseudometrics
with conical singularities on $\overline M$.  
A metric $d\sigma^2 \in \metone(\overline M)$ is called 
{\it symmetric\/} with respect to the fundamental domain 
$D$ if it is invariant under the three 
reflections $\mu_1$, $\mu_2$ and $\mu_3$.  
Moreover, if the restriction $(D, d\sigma^2|_{D})$ is isometric to 
$\Trig(A,B,C)$, we denote the metric by 
\[
   d\sigma^2=d\sigma^2_{A,B,C}\;.
\]

A meromorphic function $g$ on $\overline M$ is called 
{\it $\SU(2)$-symmetric\/} with respect to $D$ if 
\begin{equation}\label{eq:dsigmag}
   d\sigma^2_g :=\frac{4\,dg\cdot d\bar g}{(1+|g|^2)^2}
\end{equation}
is symmetric with respect to $D$.  

The following is the main theorem in this paper:
\begin{theorem}\label{thm:gen}
 Let $\overline M$ be a closed Riemann surface which is generated 
 by a triangular fundamental domain $D\subset\overline M$ by 
 reflections, and label the vertices of $D$ as $V_1$, $V_2$ and $V_3$.
 Let $g$ be an $\SU(2)$-symmetric meromorphic function on $\overline M$
 with respect to $D$, and let $Q$ be a symmetric meromorphic 
 $2$-differential on $\overline M$ with respect to $D$.  
 Suppose that{\rm :}
 \begin{enumerate}
\item\label{item:gen:angle}
      There exist $A,B_0 \in \R^+ \setminus \pi \Z$ such that
      $d\sigma^2_g=d\sigma^2_{A,B_0,\pi/2}$, 
      with $d\sigma^2_g$ as in \eqref{eq:dsigmag}.
 \item\label{item:gen:ord} 
      $Q$ is holomorphic on $D\setminus \{V_2\}$ and has a pole at
      $V_2$ with $\ord_{V_2}Q\leq -3$.
 \item\label{item:gen:branch} 
      The branch point set of $g$ outside the poles of $Q$
      equals to the zero set of $Q$, and at each zero of $Q$,
      the order of $Q$ is equal to the conical order of $d\sigma^2_g$.
 \setcounter{tempcounteri}{\value{enumi}}
 \end{enumerate}
 Let $p_1,\dots p_n \in \overline{M}$ be the set of poles of $Q$.
 Then, for some $\varepsilon>0$, there exist a 
 smooth function $B(t)\colon{}(-\varepsilon,\varepsilon)\to\R$
 satisfying $B(0)=B_0$ 
 and a $1$-parameter family of conformal \cmcone{} immersions 
 $f_t:\overline M\setminus\{p_1,\dots,p_n\} \to 
 H^3$ for $t\in (-\varepsilon,\varepsilon)$ with the following
 properties{\rm :}
 \begin{enumeraterom}
  \item\label{item:gen:concl-1} 
       The Hopf differential of $f_t$ is $tQ$ and 
       $d\sigma^2_{f_t}=d\sigma_{A,B(t),\pi/2}$.
  \item\label{item:gen:concl-3} 
       $f_t$ has irregular ends at $\{p_1,\dots,p_n\}$.
  \item\label{item:gen-concl-4}
       $f_t$ is symmetric with respect to $D$.
       That is, the image of $f_t$ is generated by the reflections
       across the edges of $f_t(D)$.
 \setcounter{tempcounterii}{\value{enumi}}
 \end{enumeraterom}
\end{theorem}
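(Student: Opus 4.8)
The plan is to build the lift $F_t$ of the desired immersion from the pair $(G_t, g_t)$ using Lemma~\ref{lem:unique}, after first producing a suitable secondary Gauss map $g_t$ and hyperbolic Gauss map $G_t$ by a monodromy-killing deformation. Concretely, I would proceed as follows.

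First, I would construct, on the fundamental domain $D$, a one-parameter family of developing maps. By hypothesis~\eqref{item:gen:angle}, $d\sigma_g^2 = d\sigma_{A,B_0,\pi/2}^2$, so $g$ restricted to $D$ agrees (up to an $\SU(2)$-motion) with the normalized developing map $g_{A,B_0,\pi/2}$ of $\Trig(A,B_0,\pi/2)$. I would deform the triangle to $\Trig(A,B(t),\pi/2)$, letting $B(t)$ vary near $B_0$ — this is legitimate for small $t$ because the inequality~\eqref{eq:tri} is open, so $\Trig(A,B(t),\pi/2)$ exists and depends smoothly on $t$ by Lemma~\ref{lem:trig}, and I would let $g_t$ on $D$ be its normalized developing map $g_{A,B(t),\pi/2}$, so $g_0 = g$ on $D$. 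Simultaneously I need a compatible hyperbolic Gauss map: using the relation~\eqref{S-rel}, $S(g_t) - S(G_t) = 2tQ$, I would \emph{define} $G_t$ on $D$ as the solution of the Schwarzian equation $S(G_t) = S(g_t) - 2tQ$; hypotheses~\eqref{item:gen:ord} and~\eqref{item:gen:branch} are exactly what makes $S(g_t) - 2tQ$ have the right singularity structure for this to have solutions, and $G_0$ is then (a choice of) the hyperbolic Gauss map of the original data. Lemma~\ref{lem:unique} then yields a null meromorphic $F_t$ on the universal cover of $D\setminus\{V_2\}$ with secondary Gauss map $g_t$ and hyperbolic Gauss map $G_t$; note $S(g_t)-S(G_t) = 2tQ \not\equiv 0$ for $t\ne 0$, so the lemma applies.

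The heart of the argument — and the main obstacle — is the monodromy-killing step: I must choose $B(t)$ so that $F_t$, built on $D$, extends to a single-valued immersion on all of $\overline M \setminus\{p_1,\dots,p_n\}$ after extending by the three reflections $\mu_1,\mu_2,\mu_3$. The reflection principle and Lemma~\ref{lem:p-I} describe precisely how $g_t$ transforms under each $\mu_j$: $\overline{g_t\circ\mu_1} = g_t$, $\overline{g_t\circ\mu_2} = e^{-2iC}g_t = -g_t$ (since $C=\pi/2$), and $\overline{g_t\circ\mu_3} = a_t \star g_t$ with $a_t = \bar a^{-1}a$ of the form in Lemma~\ref{fact}, with $q=q(t)$ given by~\eqref{eq:q-value}. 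Since $Q$ is symmetric with respect to $D$, by~\eqref{eq:three} the Hopf differential is preserved under all three reflections, so $S(G_t)$ transforms the same way $S(g_t)$ does, and hence $G_t$ picks up compatible $\SL(2,\C)$-monodromy under the $\mu_j$. The composite monodromy of $F_t$ around any loop in $\overline M\setminus\{p_1,\dots,p_n\}$ is then a product of these reflection data; for $f_t = F_tF_t^*$ to be well-defined I need all such monodromy matrices to lie in $\SU(2)$ (equivalently, satisfy $a\bar a = \mathrm{id}$ in the notation of Lemma~\ref{fact} after the appropriate adjustment). The $\SU(2)$-symmetry of $g$ guarantees the $g$-side monodromy is already unitary; the extra freedom in the $G$-side monodromy — which enters through the $\SL(2,\C)$ ambiguity in solving the Schwarzian for $G_t$ and through the value of $B(t)$ — must be pinned down. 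I would set up the $\SU(2)$-condition as a finite system of real-analytic equations in $B$ and in finitely many integration constants, check it is solved at $t=0$ (where $f_0$ may be a known immersion, or degenerate appropriately), verify the Jacobian is nondegenerate using the explicit $t$-derivative of $q(t)$ from~\eqref{eq:q-value}, and invoke the implicit function theorem to obtain the smooth $B(t)$ with $B(0)=B_0$. This is the step where the "techniques are brought to bear more intricately," in the words of the introduction, and it is where most of the work lies.

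Finally, I would verify the three asserted properties. Property~\eqref{item:gen:concl-1} is immediate from the construction: the Hopf differential of $F_t$ is $\omega_t\,dg_t$, and by~\eqref{S-rel} and Lemma~\ref{lem:unique} this equals $tQ$, while $d\sigma_{f_t}^2 = d\sigma_{g_t}^2 = d\sigma_{A,B(t),\pi/2}^2$ by~\eqref{eq:g-metric} and the choice of $g_t$. Property~\eqref{item:gen:concl-3} (irregular ends at the $p_j$): by Lemma~\ref{lem:irreg}, an end is irregular iff $\mathrm{ord}_{p_j}Q \le -3$, and by hypothesis~\eqref{item:gen:ord} the Hopf differential $tQ$ of $f_t$ has exactly this property at $V_2$, hence at every point of the orbit of $V_2$ under the reflection group, which is precisely $\{p_1,\dots,p_n\}$ since these are the poles of $Q$; so $G_t$ has an essential singularity there. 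I would also need to confirm completeness of $f_t$ and finiteness of total curvature at these ends — this follows from $\mathrm{ord}_{p_j}Q \le -3 < -1$ via~\eqref{eq:gauss} and the standard end analysis (one checks $ds^2$ is complete at $p_j$ and $\int(-K)\,dA$ is finite), and from the compactness of $\overline M$ together with $Q$ being a global meromorphic $2$-differential. Property~\eqref{item:gen-concl-4} is built in: $f_t$ is by construction invariant under $\mu_1,\mu_2,\mu_3$ because the monodromy matrices were forced into $\SU(2)$ and these act as isometries $\rho(a)$ on $H^3$, so $f_t(D)$ generates $f_t(\overline M\setminus\{p_1,\dots,p_n\})$ under reflections across the edges, as claimed.
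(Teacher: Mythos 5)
Your overall strategy---deform the cone angle $B$, kill the monodromy by the implicit function theorem, then read off (i)--(iii)---is the same as the paper's, but the step you explicitly defer (``where most of the work lies'') is precisely the content of the proof, and as sketched it does not go through. With only the single real parameter $B$ (plus normalization constants), a generic ``finite system of real-analytic equations'' expressing unitarity of the monodromy is overdetermined, so before any implicit function theorem can be invoked one must show that the period problem reduces to \emph{one} real scalar equation. The paper obtains this reduction from structural facts absent from your sketch: taking $F_{t,B}$ to be the solution of the first-order ODE \eqref{eq:our_ode} with $F_{t,B}(V_3)=\id$, the matrices in $\overline{F_{t,B}\circ\hat\mu_j}=\sigma_j(t,B)\,F_{t,B}\,\rho_j^{-1}$ satisfy $\sigma_1=\rho_1$, $\sigma_2=\rho_2$ automatically (since $V_3$ is fixed by $\mu_1,\mu_2$); $\sigma_3(t,B)$ automatically satisfies $\overline{\sigma_3}\sigma_3=\id$ and hence has the form of Lemma~\ref{fact}; and the relation $(\hat\mu_2\circ\hat\mu_3)^l=\id$ at the cone point $V_1$, combined with continuity in $t$ of the eigenvalues of $\overline{\sigma_3}\sigma_2$ (which are confined to a discrete set), rigidly forces $\Im p(t,B)=\cos A$ for all $t$. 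Only then does the problem become the single equation $\Re p(t,B)=-\cos B_0$, solvable in $B$ because $\partial_B \Re q(B)|_{B=B_0}=\sin B_0\neq 0$; note the relevant nondegeneracy is a $B$-derivative at $t=0$, not ``the $t$-derivative of $q$'' as you propose.

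A second gap is your single-valuedness criterion: requiring the monodromy of $F_t$ merely to ``lie in $\SU(2)$'' is not sufficient, because a nontrivial left factor $a\in\SU(2)$ yields $f_t\circ T=a\,f_t\,a^{*}$, i.e.\ the surface moved by an isometry, not $f_t$ itself. What the paper actually arranges (after the diagonal rescaling by $\sqrt[4]{\nu_2/\nu_1}$, using $\nu_1\nu_2>0$) is $\sigma_j(t)=\rho_j(B_0)$ \emph{exactly}---the reflection matrices of the original single-valued $g$---so that along any deck transformation the left-hand product is $\pm\id$ precisely because $g$ is single-valued on $M$, while the right-hand factors $\rho_j(B(t))$ are unitary; this is Step~6 of the paper and nothing in your sketch produces it. Finally, your base point for the implicit function theorem is problematic within your own framework: at $t=0$ one has $S(g_0)-S(G_0)\equiv 0$, so Lemma~\ref{lem:unique} does not apply and there is no immersion $f_0$ to anchor the argument; the paper avoids this by working directly with \eqref{eq:our_ode}, whose solution at $t=0$ is $F_{0,B}=\id$, so the monodromy matrices $\sigma_j(0,B)=\rho_j(B)$ are still defined and explicitly computable there.
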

\begin{remark}
 The construction method for proving Theorem~\ref{thm:gen} will 
 still work if $\ord_{V_2}Q=-2$. %
 The stronger assumption $\ord_{V_2}Q\leq -3$ in \ref{item:gen:ord}
 is required only to make the ends irregular
(see Lemma~\ref{lem:irreg}).
\end{remark}
The next theorem gives conditions which imply the surfaces $f_t$ in
Theorem~\ref{thm:gen} are irreducible.
\begin{theorem}
\label{thm:irred}
 Under the assumptions in Theorem~{\rm \ref{thm:gen}}, if
 $A\not\equiv \pi/2 \pmod{\pi}$ and 
\begin{equation}\label{eq:irred-cond}
         \oint_{\tau}
           \begin{pmatrix}
		  g & - g^2 \\
                  1 & -g\hphantom{^2}
	   \end{pmatrix}\frac{Q}{dg}\neq 0\;
\end{equation}
  for a local loop $\tau$ about $V_2$,  then 
  $f_t$ is irreducible for $t$ sufficiently close to zero.
\end{theorem}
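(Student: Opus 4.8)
The plan is to show that the monodromy representation $\rho_t$ of the secondary Gauss map $g_t$ of $f_t$ is not simultaneously diagonalizable. By the construction of Theorem~\ref{thm:gen}, $g_t$ is a developing map of $d\sigma^2_{A,B(t),\pi/2}$, so $f_t$ is reducible exactly when $\rho_t(\pi_1)$ is conjugate into the diagonal subgroup of $\SU(2)$. Now $\rho_t(\pi_1)$ is generated by the local monodromies about the ends $p_1,\dots,p_n$, and since $Q$ is symmetric and has a pole inside $D$ only at $V_2$, the ends are exactly the $\langle\mu_1,\mu_2,\mu_3\rangle$-orbit of $V_2$; in particular $\mu_2(V_2)$ is again a pole of $Q$, hence one of the ends, and $\mu_2(V_2)\ne V_2$. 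So it is enough to prove that, for $t\ne 0$ sufficiently small, the local monodromies of $g_t$ about $V_2$ and about $\mu_2(V_2)$ are both nontrivial and have no common fixed point on $\CP^1$.

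The second step is to compute these two monodromies from Lemma~\ref{lem:p-I} with $C=\pi/2$ (after a harmless $\SU(2)$-normalization of $g_t$ on $D$, making $g_t|_D$ the normalized developing map of $\Trig(A,B(t),\pi/2)$, which only conjugates $\rho_t$). If $2N$ copies of $D$ meet at $V_2$, then going once around $V_2$ is $N$ applications of the pair $\mu_3,\mu_1$ fixing $V_2$; using $\overline{g_t\circ\mu_1}=g_t$ and $\overline{g_t\circ\mu_3}=P_3(t)\star g_t$ gives the local monodromy about $V_2$ equal to $R_3(t)^{N}$, where $R_3(t)=\overline{P_3(t)}=\begin{pmatrix}\overline{q(t)}&-i\delta(t)\\-i\delta(t)&q(t)\end{pmatrix}$, $q(t)=i\cos A-\cos B(t)$, and $\delta(t)=\sqrt{1-\cos^2A-\cos^2B(t)}>0$ by Lemma~\ref{lem:trig}. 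Since $\trace R_3(t)=-2\cos B(t)$, its eigenvalues are $-e^{\pm iB(t)}$, so $R_3(t)^{N}$ has eigenvalues $(-1)^{N}e^{\pm iNB(t)}$ and is $\ne\pm\id$ exactly when $B(t)\notin(\pi/N)\Z$, i.e.\ (for $B(t)$ near $B_0$) when $B(t)\ne B_0$. Because $\mu_2$ acts on $g_t$ by $g_t\mapsto-\overline{g_t}$, conjugation by $\mu_2$ replaces $R_3(t)^{N}$ by an $\SU(2)$-conjugate of $\bigl(wR_3(t)w^{-1}\bigr)^{N}$ with $w=\begin{pmatrix}0&1\\1&0\end{pmatrix}$, and this is the local monodromy about $\mu_2(V_2)$.

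The two monodromies share a fixed point on $\CP^1$ iff $w$ maps an eigenline of $R_3(t)$ onto an eigenline of $R_3(t)$. The eigenline of $R_3(t)$ for $-e^{\mp iB(t)}$ is spanned by $\bigl(\delta(t),\,-(\cos A\pm\sin B(t))\bigr)$, and a direct check then shows that $w$ permutes these lines only if $\cos A=0$, or $\delta(t)=0$, or $\cos^2A=\sin^2B(t)$. The hypothesis $A\not\equiv\pi/2\pmod\pi$ excludes the first and non-degeneracy of $\Trig(A,B(t),\pi/2)$ the second. For the third: either $\cos^2A\ne\sin^2B_0$, and then $\cos^2A\ne\sin^2B(t)$ for $t$ near $0$ by continuity; or $\cos^2A=\sin^2B_0$, in which case $B_0\notin(\pi/2)\Z$ (otherwise $\cos^2A=1$ and $A\in\pi\Z$), so $x\mapsto\sin^2x$ is injective near $B_0$ and $\cos^2A=\sin^2B(t)$ would force $B(t)=B_0$. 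Hence, once we know that $B(t)\ne B_0$ for $t\ne 0$ small, all three cases are excluded, both monodromies are nontrivial, they have no common fixed point, $\rho_t(\pi_1)$ is not simultaneously diagonalizable, and $f_t$ is irreducible.

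So everything comes down to showing $B(t)\ne B_0$ for $t\ne 0$ sufficiently small, which is precisely where~\eqref{eq:irred-cond} is used. If $B(t_0)=B_0$ for some such $t_0$, then $d\sigma^2_{g_{t_0}}=d\sigma^2_g$; since $d\sigma^2_g$ carries the single-valued developing map $g$ it has trivial monodromy, so $g_{t_0}$ is single-valued on $M$, and then Lemma~\ref{lem:unique} (applicable because $S(g_{t_0})-S(G_{t_0})=2t_0Q\not\equiv0$) makes $F_{t_0}$ itself single-valued on $M$ up to sign. On the other hand $F_t^{-1}dF_t=t\begin{pmatrix}g_t&-g_t^2\\1&-g_t\end{pmatrix}\frac{Q}{dg_t}$, and as $t\to 0$ its period about a small loop $\tau$ at $V_2$ satisfies $\frac1t\oint_\tau F_t^{-1}dF_t\to\oint_\tau\begin{pmatrix}g&-g^2\\1&-g\end{pmatrix}\frac{Q}{dg}$, which is nonzero by~\eqref{eq:irred-cond}. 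I expect the hard part of the theorem to be exactly this step: converting the nonvanishing of that period into an obstruction to $F_{t_0}$ being single-valued at the irregular end $V_2$ — a local question about the solution of the Weierstrass ODE near an irregular singular point, and the technical heart of the result. The rest is the elementary, if somewhat lengthy, linear algebra of the eigenvectors of $R_3(t)$ sketched above.
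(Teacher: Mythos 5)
Your reduction of irreducibility to the non-commutation of the two local monodromies about $V_2$ and about $\mu_2(V_2)$, and the eigenvector analysis of $R_3(t)^N$ against its conjugate, is essentially the route the paper takes: there one sets $a=\overline{\rho_3(B(t))}^k$, $b=\rho_2\,a\,\rho_2$, computes $[a,b]$, and excludes the two commuting cases using $A\not\equiv\pi/2\pmod\pi$ and the non-degeneracy of the triangle, which matches your exclusion of $\cos A=0$, $\delta(t)=0$, $\cos^2A=\sin^2 B(t)$. One caveat: ``an $\SU(2)$-conjugate of $(wR_3(t)w^{-1})^N$'' is too weak as stated, since an unknown conjugation would make the fixed-point comparison meaningless; what you need (and what is true, agreeing projectively, up to inverse, with the paper's $\rho_2a\rho_2$) is that in the \emph{same} normalization in which the monodromy at $V_2$ is $R_3(t)^N$, the monodromy at $\mu_2(V_2)$ is exactly $wR_3(t)^Nw^{-1}$.

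The genuine gap is the step you explicitly defer: showing $B(t)\neq B_0$ for all $0<|t|<\varepsilon$. That is the only place \eqref{eq:irred-cond} enters and it is the substance of the theorem, so as written the proposal assumes the hard part rather than proving it. Moreover the missing argument is not, as you suggest, a delicate local question at the irregular singular point: the loop $\tau$ stays inside $M$, so the holonomy along $\tau$ of the linear system $F^{-1}dF=t\alpha$ with $\alpha=\bigl(\begin{smallmatrix}g&-g^2\\ 1&-g\end{smallmatrix}\bigr)Q/dg$ and $B=B_0$ frozen is an ordinary monodromy matrix depending analytically on $t$, with expansion $\id+t\oint_\tau\alpha+O(t^2)$; by \eqref{eq:irred-cond} it is $\neq\pm\id$ for every sufficiently small $t\neq0$. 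This first-variation identity is precisely what the paper computes (its equation \eqref{eq:tdiff} together with differentiating \eqref{eq:our_ode} in $t$ at $t=0$, using $F_{0,B}=\id$ and $F_{t,B}(V_3)=\id$). On the other hand, from $F_{t,B}\circ\tau=\overline{\sigma_3(t,B)}^kF_{t,B}\,\overline{\rho_3(B)}^{-k}$ and $\rho_3(B_0)^k=\pm\id$, the holonomy would have to be $\pm\id$ whenever $B(t_0)=B_0$ (in the paper this comes from the Step~5 normalization forcing $\overline{\sigma_3(t,B(t))}^k=\pm\id$; your alternative via single-valuedness of $g_{t_0}$ and Lemma~\ref{lem:unique} can serve the same purpose), and the contradiction yields $B(t)\neq B_0$ for $0<|t|<\varepsilon$. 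Your statement that $\tfrac1t\oint_\tau F_t^{-1}dF_t\to\oint_\tau\alpha$ is true but is not yet a statement about the monodromy; supplying the bridge ``first-order term of the holonomy equals the period'' is exactly the missing (and, in the paper, central) step, and it requires no analysis of the essential singularity at all.
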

The proofs of these theorems are given in Section~\ref{sec:proof}.
\section{CMC-1 surfaces with dihedral and Platonic symmetries}
\label{sec:platonic}
In this section, we construct examples of finite total curvature 
\cmcone{} surfaces with irregular ends and either dihedral or Platonic
symmetries.
Examples with dihedral symmetries and the simplest example with
tetrahedral symmetry (the case $m=1$ in \eqref{eq:tetraQ} and
\eqref{eq:tetrag}) are irreducible.  
(Though we expect all the other examples to be irreducible, we have not
checked them yet.)

\subsection*{CMC-1 surfaces with dihedral symmetries}
Let $n\geq 3$ be an integer and 
\[
     M_n:=\C\cup\{\infty\}\setminus\{1,\zeta,\dots,\zeta^{n-1}\}
     \qquad \left(\zeta=\exp\frac{2\pi}{n} i\right)\;.
\]
The Jorge-Meeks' $n$-noid is the complete minimal immersion 
$f_{n,0}\colon{}M_n\to\R^3$ given by the Weierstrass representation
as
\begin{multline*}
     f_{n,0}:= \Re\int
               \bigl((1-g_{n,0}^2), i(1+g_{n,0}^2),2 g_{n,0}\bigr)
               \frac{Q_{n,0}}{dg_{n,0}}\;,\\
     \text{where}\qquad
     g_{n,0} = z^{n-1},\quad
     Q_{n,0} = \frac{z^{n-2}}{(z^n-1)^2}\,dz^2\;.
\end{multline*}
The $\Z_2$ extension $D_n \times \Z_2$ of the dihedral group 
$D_n$ acts isometrically on the image of $f_{n,0}$.  

There exists a one-parameter family of corresponding 
\cmcone{} immersions of $M_n$ to $H^3$ such that  the
hyperbolic Gauss map is $g_{n,0}$ and the Hopf differential is
proportional to $Q_{n,0}$ (see \cite{uy3,ruy1}, and
Figure~\ref{fig:trinoid} for the $n=3$ case).
Since $\ord_{\zeta^j}Q_{n,0}=-2$ $(j=0,\dots,n-1)$, the ends of 
these corresponding surfaces are regular.
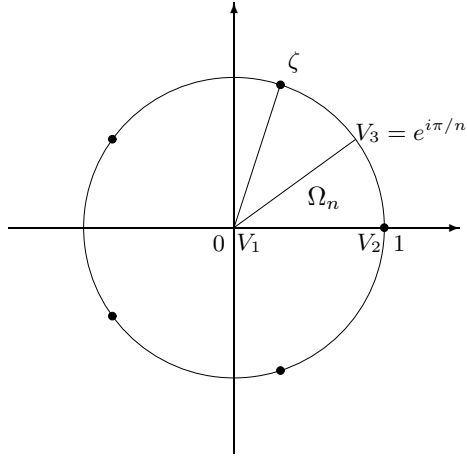
\begin{figure}
 \setlength{\unitlength}{2cm}
 \small
 \begin{center}
  \begin{picture}(3,3)(-1.5,-1.5)
   \put(-1.5,0){\vector(1,0){3}}
   \put(0,-1.5){\vector(0,1){3}}
   \put(0,0){\circle{2}}
   \put(1,0){\circle*{0.05}}
   \put(0.30901699,0.95105652){\circle*{0.05}}
   \put(0.30901699,-0.95105652){\circle*{0.05}}
   \put(-0.80901699,0.58778525){\circle*{0.05}}
   \put(-0.80901699,-0.58778525){\circle*{0.05}}
   \drawline(0,0)(1.2,0)
   \drawline(0,0)(0.30901699,0.95105652)
   \drawline(0,0)(0.80901699,0.58778525)
   \put(1.1,-0.1){\makebox(0,0){$1$}}
   \put(0.4,1.1){\makebox(0,0){$\zeta$}}
   \put(-0.1,-0.1){\makebox(0,0){$0$}}
   \put(0.6,0.2){\makebox(0,0){\normalsize $\Omega_n$}}
   \put(1.18,0.65){\makebox(0,0){$V_3=e^{i \pi/n}$}}
   \put(0.1,-0.1){\makebox(0,0){$V_1$}}
   \put(0.9,-0.1){\makebox(0,0){$V_2$}}
  \end{picture} 
 \end{center}
 \caption{The fundamental domain of surfaces with dihedral symmetry}
 \label{fig:fund-jm}
\end{figure}

However, as we wish to produce \cmcone{} surfaces in $H^3$ whose 
ends are {\em not} regular, 
we now modify our choices for $Q$ and $g$ to accomplish this:  
Let $m\geq 1$ be an integer and set
\[
     Q_{n,m}:= \frac{z^{n(m+1)-2}}{(z^n-1)^{2(m+1)}}\,dz^2,\qquad
     g_{n,m}:= z^{n(m+1)-1}\;.
\]
Then $g=g_{n,m}$ and $Q=Q_{n,m}$ are a meromorphic function and 
a meromorphic $2$-differential on $\overline M=\C\cup\{\infty\}$
respectively,
which are symmetric with respect to the fundamental domain $\Omega_n$ as
in Figure~\ref{fig:fund-jm}.
Moreover, $(g,Q)$ satisfies the assumptions 
\ref{item:gen:angle}--\ref{item:gen:branch} of 
Theorem~\ref{thm:gen} for $A=\pi(m+1)-\pi/n$
and $B_0=\pi/2$.
Then, for each $n$ and $m$, there exists a one-parameter family of 
\cmcone{} immersions $f_{n,m,t}\colon{}M_n\to H^3$ ($0<|t|<\varepsilon$)
with symmetry group 
$D_n \times \Z_2$ whose Hopf differential is $tQ_{n,m}$
(see Figure~\ref{fig:trinoid}).  The total Gaussian curvature of these 
surfaces will be approximately $4\pi (n(m+1)-1)$.  
\begin{figure}
\small
\begin{center}
\begin{tabular}{c@{\hspace{1cm}}c}
  \includegraphics[width=3cm]{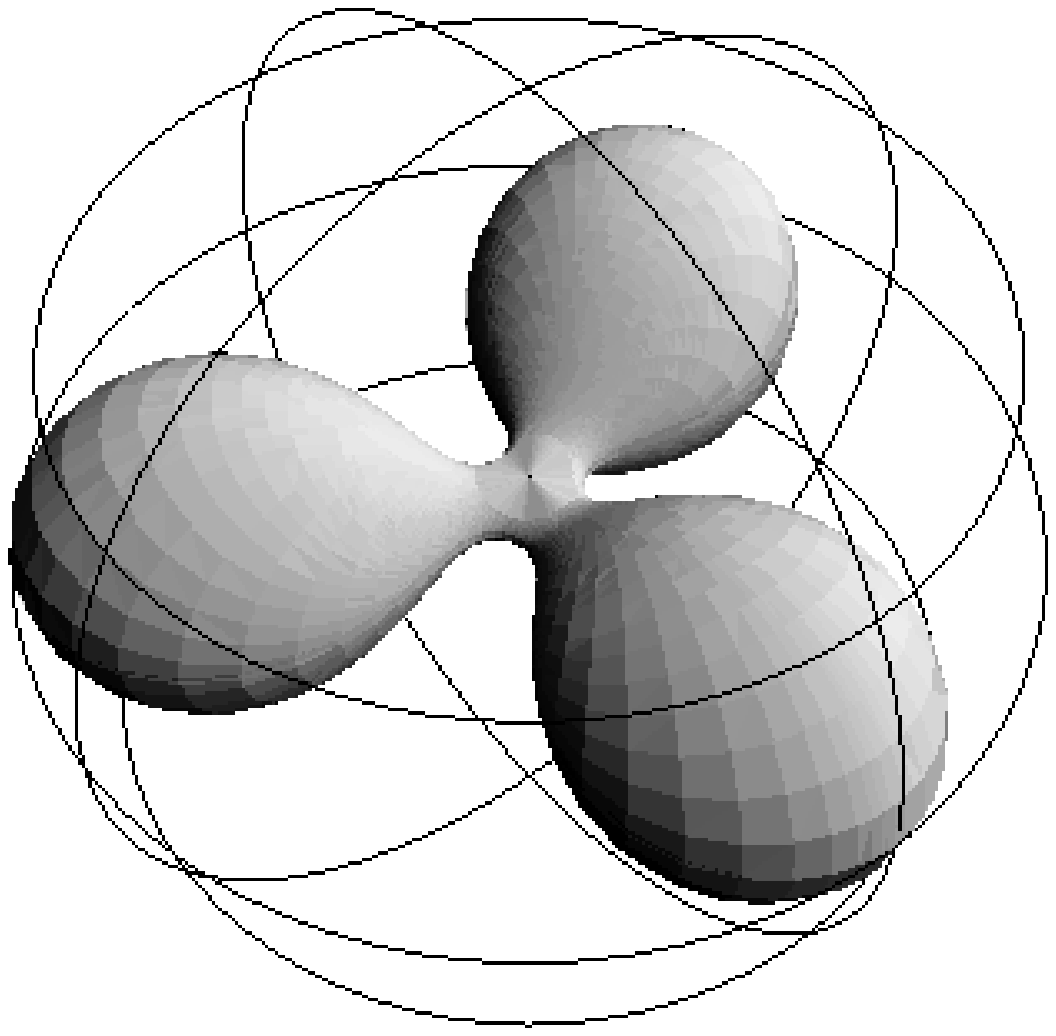} &
  \includegraphics[width=3cm]{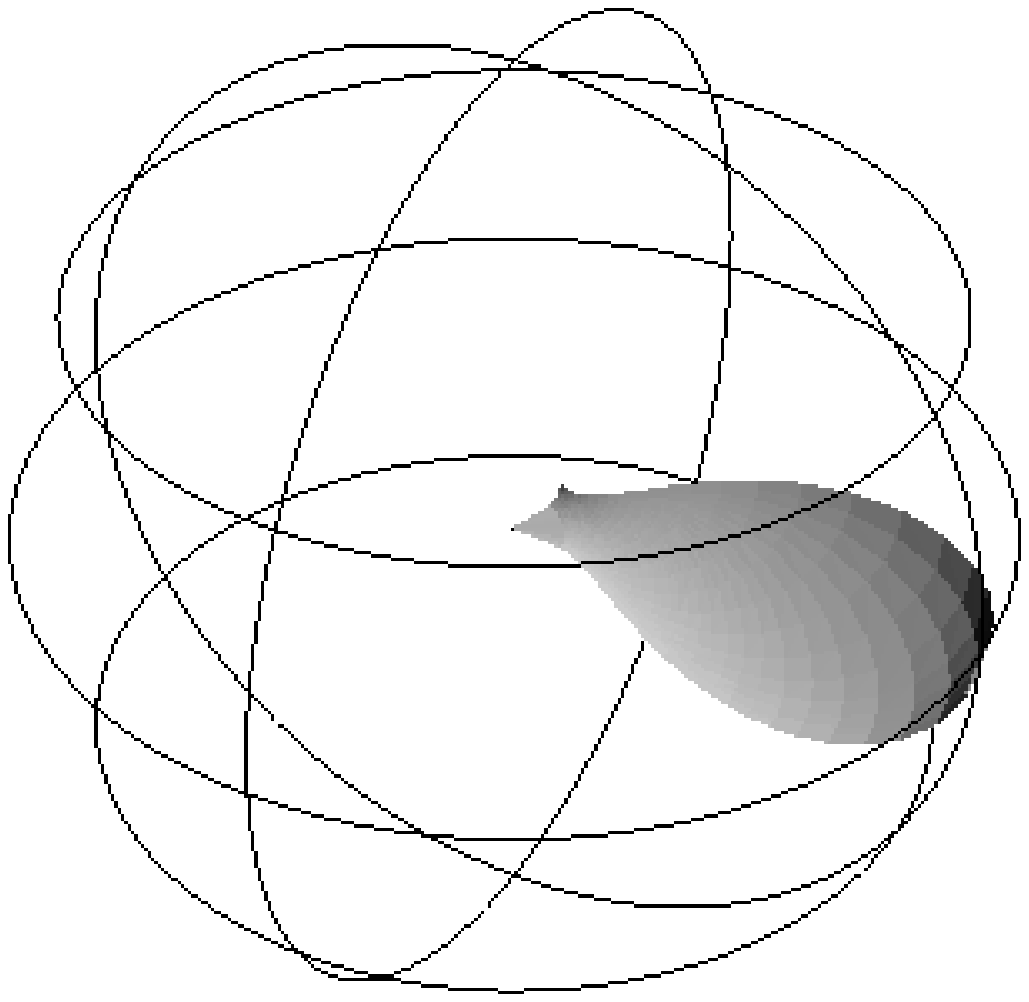} \\
  (a) $f_{3,0,t}$ &
  (b) Fundamental piece of $f_{3,0,t}$ \\[2ex]
  \includegraphics[width=3cm]{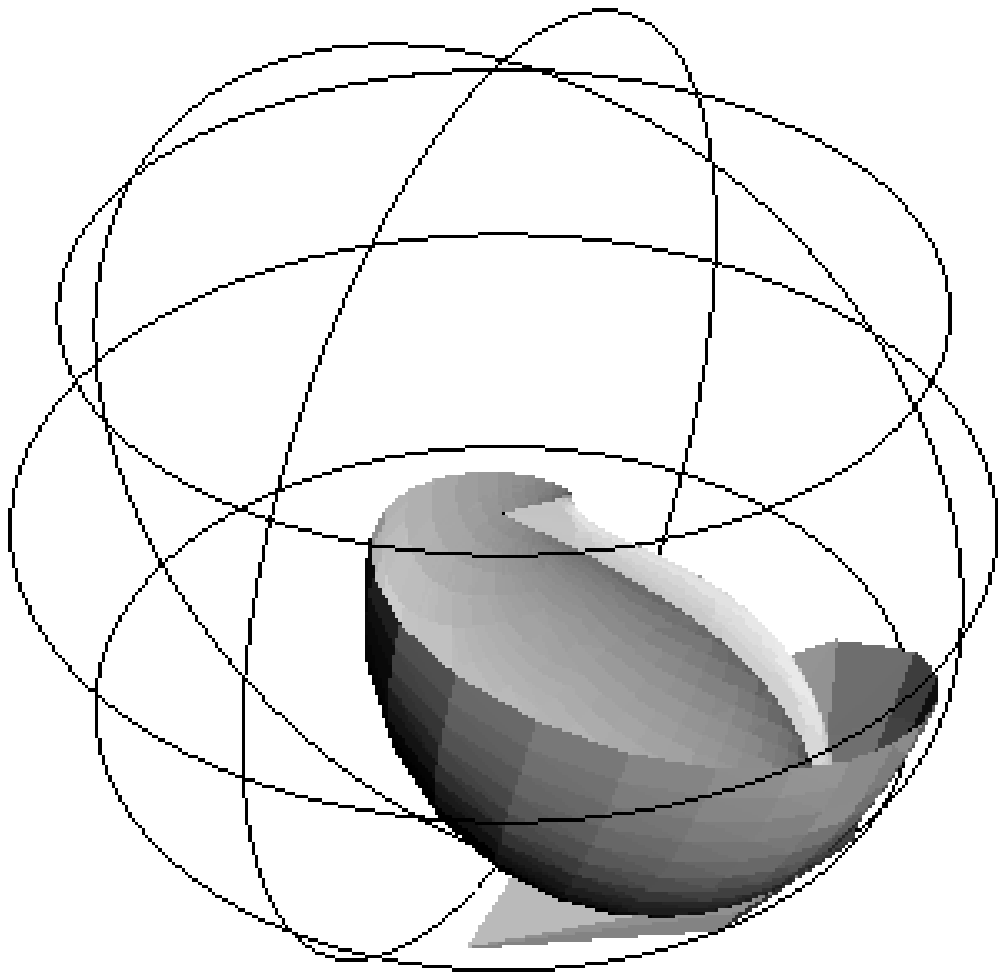} &
  \includegraphics[width=3cm]{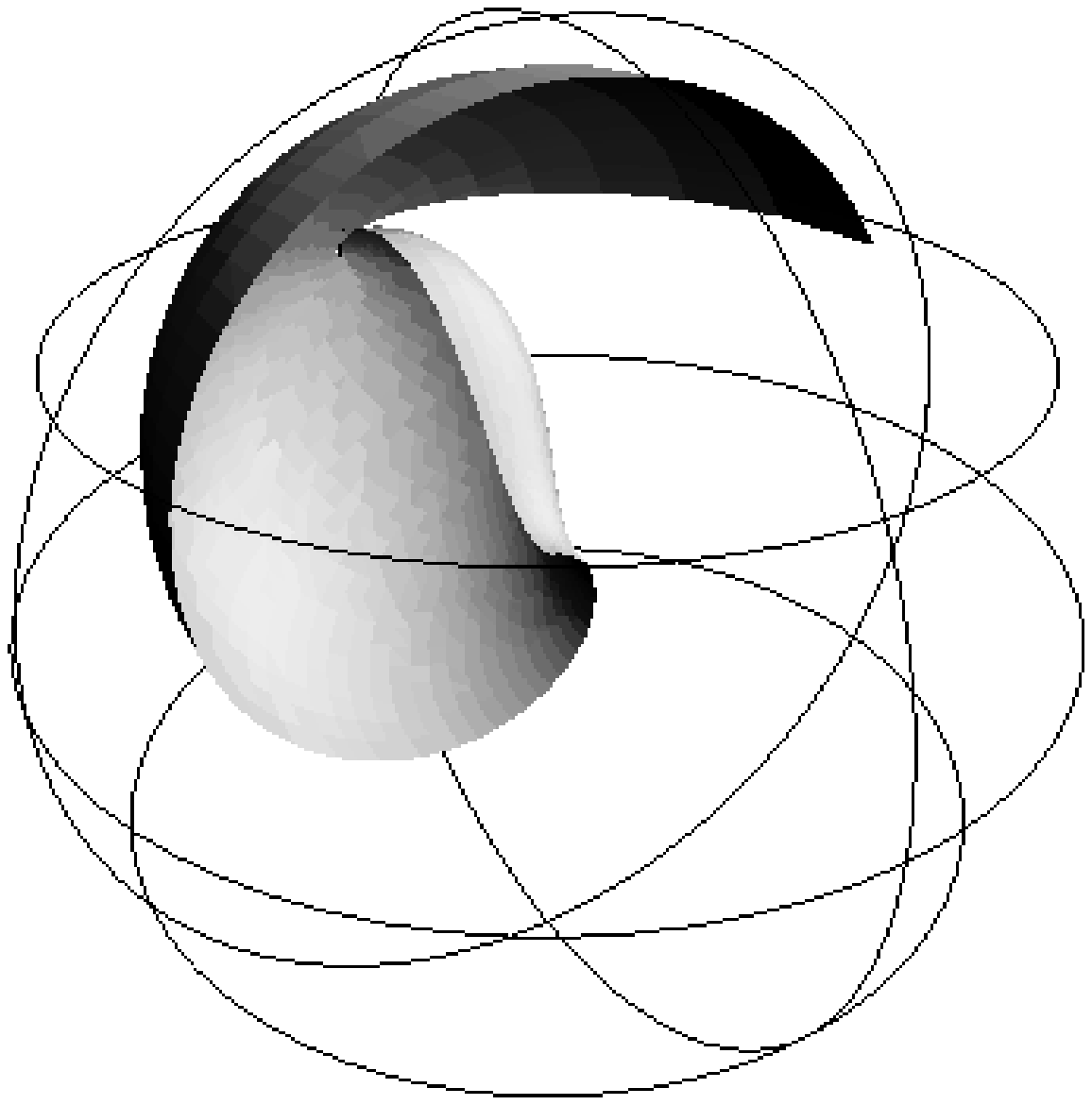} \\
  (c) Fundamental piece of $f_{3,1,t}$ &
  (d) Enneper's cousin
\end{tabular}
\end{center}
\caption{Surfaces with dihedral symmetry:
         Figure (a) shows a \cmcone{} surface with 
         dihedral symmetry and 
         three regular ends (the surface corresponding to 
         a Jorge-Meeks surface) in the Poincar\'e model of $H^3$, 
         and figure (b) is the fundamental region of the surface in 
         figure (a).  
         Figure (c) shows the fundamental piece of $f_{3,1,t}$,
         a surface of dihedral symmetry with three irregular ends.  
         The central part of $f_{3,1,t}$ is similar to that of the 
         hyperbolic correspondence of a Jorge-Meeks surface.  
         The end of $f_{3,1,t}$ seen here is similar to the end of an 
         Enneper cousin \cite{Bryant}, which is shown in figure (d).}
\label{fig:trinoid}
\end{figure}

Let $\tau$ be a loop surrounding the end $V_2=1$.  Since 
\[
    \frac{Q_{n,m}}{dg_{n,m}}
         =\left(\frac{1}{n(m+1)-1}\right)\frac{dz}{(z^n-1)^{2(m+1)}}\;,
\]
we have
\[
    \oint_{\tau}\frac{Q_{n,m}}{dg_{n,m}}=
       \left(\frac{2\pi i}{n(m+1)-1}\right)
       \Res_{z=1} \frac{1}{(z^n-1)^{2(m+1)}} \neq 0\;.
\]
Thus by Theorem~\ref{thm:irred}, the surfaces are irreducible 
for sufficient small $t$.
\subsection*{CMC-1 surfaces with tetrahedral symmetries}
It is well-known that there exists a minimal immersion
\[
   f_{0}\colon{}M
     :=\C\cup\{\infty\}\setminus\{p_1,\dots,p_4\}
     \longrightarrow
     \R^3
\]
with $4$ catenoid ends and tetrahedral symmetry \cite{Kat,Xu,BR,uy3}
and corresponding \cmcone{} surfaces in $H^3$ \cite{uy3,ruy1} with
regular ends.

We denote by $Q_{0}$ and $g_{0}$ the Hopf differential and the
Gauss map of $f_{0}$ respectively.
Since each end is asymptotic to a catenoid, $\ord_{p_j}Q_{0}=-2$
($j=1,\dots,4$).
Then there exists $4$ umbilic points (zeros of $Q_{0}$)
$q_1,\dots,q_4$ such that $\ord_{q_j}Q_{0}=1$ ($j=1,\dots,4$).
The Gauss map $g_{0}$ is a meromorphic function on
$\C\cup\{\infty\}$ whose branch points are $\{q_1,\dots,q_4\}$ 
each with branch order $1$.
\begin{figure}
 \setlength{\unitlength}{7mm}
 \small
 \begin{center}
  \begin{picture}(7,7)
   \Thicklines
   \drawline(4,6)(4.5,1)(1,2)(4,6)
   \drawline(4,6)(6,3)(4.5,1)
   \thinlines
   \dottedline{0.1}(1,2)(6,3)
   \drawline(4,6)(2.75,1.5)
   \drawline(4.5,1)(2.5,4)
   \drawline(1,2)(4.25,3.5)
   \put(4,6.4){\makebox(0,0){$p_1$}}
   \put(4.5,0.6){\makebox(0,0)[cc]{$p_2$}}
   \put(0.6,2){\makebox(0,0)[cc]{$p_3$}}
   \put(6.4,3){\makebox(0,0)[cc]{$p_4$}}
   \put(2.7,3.15){\makebox(0,0)[cc]{$q_1$}}
   \put(3.8,4){\makebox(0,0)[cc]{\normalsize$D$}}
  \end{picture} 
 \end{center}
 \caption{The fundamental domain of surfaces with tetrahedral symmetry}
 \label{fig:fund-tet}
\end{figure}
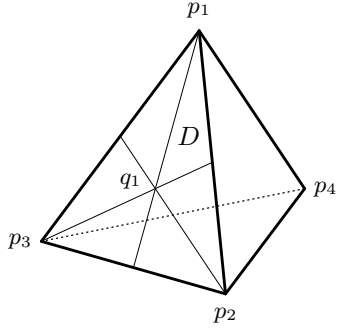
Moreover, $M$ is obtained by reflections across the edges of the 
fundamental domain $D$, which is a triangle with vertices 
$V_1=q_1$, $V_2=p_1$, $V_3$.  (See Figure~\ref{fig:fund-tet}. 
See also the construction in \cite{uy3}.) 
The Hopf differential $Q_{0}$ and the Gauss map $g_{0}$ are symmetric
with respect to the fundamental domain $D$.

Consider the Schwarzian derivative $S(g_{0})$ of $g_{0}$, as in 
\eqref{eq:sch}, 
where $z$ is the usual complex coordinate of $\C\cup\{\infty\}$.
Then $S(g_0)$ is a meromorphic $2$-differential on $\C\cup\{\infty\}$.
Moreover, since $g_{0}$ is symmetric, $S(g_0)$ is invariant under
reflections about the edges of $D$.

The branch points of $g_0$ are poles of $S(g_0)$, 
and each pole of $S(g_{0})$ has order $-2$.  
So $S(g_0)$ has $4$ poles of order $2$ at the $q_j$ ($j=1,\dots,4$) and
is holomorphic on $\C\cup\{\infty\}\setminus\{q_1,\dots,q_4\}$.
Since the total order of a meromorphic $2$-differential on
$\C\cup\{\infty\}$ is $-4$, $S(g_0)$ has $4$ zeros (counting
multiplicity).
If there exists a zero of $S(g_0)$ on the interior of $D$, $S(g_0)$ has
at least $24$ zeros because $\C\cup\{\infty\}$ consists of $24$ copies
of the fundamental region $D$, which is impossible.
Similarly, if a zero of $S(g_0)$ lies on the interior of a edge of $D$,
$S(g_0)$ has at least $12$ zeros, which is also impossible.
If the vertex $V_3$ of $D$ is a zero of $S(g_0)$, there exist $6$ zeros,
which is again impossible.  
Since $V_1=q_1$ is a pole of $S(g_0)$, the set of zeros of $S(g_0)$ must
be $\{p_1,\dots,p_4\}$, and $\ord_{p_j}S(g_0)=1$ for $j=1,\dots 4$.  

For an integer $m\geq 1$, we define a meromorphic $2$-differential
$Q_{m}$ as
\begin{equation}\label{eq:tetraQ}
    Q_{m}:=\frac{Q_{0}{}^{m+1}}{S(g_0)^m}\;,
\end{equation}
where  $Q_{0}{}^{m+1}$ (resp.~$S(g_0)^m$) is the symmetric product of
$m+1$ copies of $Q_{0}$ (resp.~$m$-copies of $S(g_0)$).
Since the poles of $Q_{0}$ and the zeros of $S(g_0)$ are
$\{p_1,\dots,p_4\}$,
$Q_{m}$ has the poles $\{p_1,\dots,p_4\}$ and is holomorphic on $M$.
More precisely, 
\[
   \ord_{p_j}Q_{m}= -3m-2\qquad\text{and}\qquad
   \ord_{q_j}Q_{m}= 3m+1\qquad (j=1,\dots,4)
\]
hold.
Since $Q_{0}$ and $S(g_0)$ are invariant under the reflections,
so is $Q_{m}$.

Consider an abstract spherical triangle $\Trig(A,B_0,C)$ with
\begin{equation}\label{eq:tetrag}
   A = \pi m + \frac{2}{3}\pi\;,\quad
   B_0 = \frac{\pi}{3}\;,\quad
   C = \frac{\pi}{2}\;,
\end{equation}
and identify it with the fundamental domain $D$.
Then we have a pseudometric
$d\sigma^2_{A,B_0,C}\in\metone(\C\cup\{\infty\})$.
Since neighborhoods of $V_1$, $V_2$ and $V_3$ are generated by 
$6$, $6$ and $4$ copies of the fundamental domain $D$, respectively, 
$d\sigma^2_{A,B_0,C}$ is a pseudometric whose conical orders are all
integers.
Hence by Proposition~\ref{prop:A} in Appendix, the developing map 
$g_{m}$ of $d\sigma^2_{A,B_0,C}$ is meromorphic on $\C\cup\{\infty\}$.

Then one can easily check that $(g_{m},Q_{m})$ satisfies 
the assumptions \ref{item:gen:angle}--\ref{item:gen:branch} of 
Theorem \ref{thm:gen}.  
Hence for each $m$, there exists a one-parameter family of \cmcone{}
immersions $\{f_{m,t}\,;\,0<|t|<\varepsilon\}$ of $M$ into $H^3$ with
irregular ends and tetrahedral symmetry.

Finally, we check irreducibility for $m=1$.
We may set  
\[
   M = \C\cup\{\infty\}\setminus\{1,\zeta,\zeta^2,\infty\}\;,
   \qquad\text{where}\quad \zeta =\exp\frac{2}{3}\pi i\;,
\]
and 
\[
    g_0 = \frac{1}{3\sqrt{2}}\left(z-\frac{4}{z^3}\right),\qquad
    Q_0 = \frac{z(z^3+8)}{(z^3-1)^2}\,dz^2
\]
(see page 221 of \cite{uy3}).
Hence the umbilic points are  $\{0,-2,-2\zeta,-2\zeta^2\}$.
By direct calculation, we have
\[
    Q_1 = \frac{Q_0{^2}}{S(g_0)}=
    \frac{1}{96}\frac{z^4(z^3+8)^4}{(z^3-1)^5}\,dz^2\;.
\]
On the other hand, $g_{1}$ is a meromorphic function which branches at 
the umbilic points with  branch order $4$.
Then we have $\deg g_1=9$ by the Riemann-Hurwitz formula.
Choose a rotation $a\in\SU(2)$ such that $a\star g_1(q_1)=\infty$,
where we set $q_1=0$.
Then $q_j$ $(j=2,3,4)$ are not poles of $g:=a\star g_1$,
because the multiplicity of $g$ at $q_j$ is $5$ for each $j$ 
and $\deg g=9$.
Moreover, $d\sigma^2_{g_1}=d\sigma^2_g$ has a conical singularity at $0$
with conical order $4$.
Hence, by symmetricity, we have $g_1(\zeta z)=\zeta g_1(z)$, and we may write
\[
    dg = \beta \frac{(z^3+8)^4}{z^6(z^3-a^3)^2}\,dz
\]
for some nonzero constants $a$ and $\beta$.
Such a function $g$ exists if and only if all residues of the right-hand
side vanish, which is equivalent to $a^3=16$.
Then one can check irreducibility by direct calculation and 
Theorem~\ref{thm:irred}.
\subsection*{CMC-1 surfaces with Platonic symmetries}
There are genus zero minimal surfaces in $\R^3$ with catenoidal ends and
the symmetry of any Platonic solid (\cite{Kat,Xu,BR,uy3}).
By similar arguments to the tetrahedral case above, one can obtain 
\cmcone{}
immersions with irregular ends and any Platonic symmetry.
Table~\ref{tab:platonic} shows the data for such surfaces.
\begin{table}
\footnotesize
\begin{center}
\newcommand{\vr}{\rule[-1.5ex]{0mm}{2.8ex}}
 \begin{tabular}{|c||c|c|c|c|c|c|c|}
 \hline
  Symmetry &
  {$\# \{p_j\}$}& 
  {$\# \{q_j\}$}& 
  {$\ord_{q_j}Q_0$} &
  {$\ord_{p_j}Q_m$} &
  {$\ord_{q_j}Q_m$} &
  {$A$} &
  {$B_0$} \\
 \hline
 \hline
  Dihedral &
  $n$ & $2$ & $n-2$ & $2(m+1)$ & $n(m+1)-2$ & 
  $\pi\left(m+1-\frac{1}{n}\right)$ & $\frac{\pi}{2}$\vr\\
 \hline
  Tetrahedral &
  $4$ & $4$ & $1$ & $-3m-2$ & $3m+1$ & $\pi\left(m+\frac{2}{3}\right)$ &
  $\frac{\pi}{3}$\vr\\
 \hline
  Octahedral &
  $8$ & $6$ & $2$ & $-3m-2$ & $4m+2$ & $\pi\left(m+\frac{4}{3}\right)$ &
  $\frac{\pi}{3}$\vr\\
 \hline
  Octahedral &
  $6$ & $8$ & $1$ & $-4m-2$ & $3m+1$ & $\pi\left(m+\frac{2}{3}\right)$ &
  $\frac{\pi}{4}$\vr\\
 \hline
  Icosahedral &
  $20$ & $12$ & $3$ & $-3m-2$ & $5m+3$ & $\pi\left(m+\frac{4}{5}\right)$
  & $\frac{\pi}{3}$\vr\\
 \hline
  Icosahedral &
  $12$ & $20$ & $1$ & $-5m-2$ & $3m+1$ & $\pi\left(m+\frac{2}{3}\right)$
  & $\frac{\pi}{5}$\vr\\ 
 \hline
 \end{tabular}
\end{center}
\caption{Data for \cmcone{} surfaces with Platonic symmetries}
\label{tab:platonic}
\end{table}
\section{Proof of the main theorem}
\label{sec:proof}
\subsubsection*{Proof of Theorem~\ref{thm:gen}}%
\newcommand{\tempskip}{\vspace{0.5ex}}
\begin{proof}[Step 1{\rm :}]
 Take a real number $B\not\in \pi\Z$ and let 
 \begin{equation}\label{eq:rho}
 \begin{aligned}
   \rho_1&=\id,\qquad
   \rho_2=\begin{pmatrix}
	   -i            &  0 \\
	   \hphantom{-}0 &  i
	  \end{pmatrix} \; ,\\
   \rho_3&=\rho_3(B)=
          \begin{pmatrix}
           q(B) & i \delta(B) \\
          i\delta(B) & \bar q(B)
           \end{pmatrix} 
      \qquad (\delta\in \R,~ q\bar q+\delta^2=1) \; ,
 \end{aligned}
 \end{equation}
 where
 \begin{equation}\label{eq:C}
     q(B) =i\cos A -\cos B \; .
 \end{equation}
 Then we have 
 \[
   \bar \rho_j \rho_j=\id\qquad (j=1,2,3) \; .
 \]
 Since $\Trig(A,B_0,\pi/2)$ is non-degenerate (by the assumption
 \ref{item:gen:angle}), 
 $A$, $B_0$ and $C=\pi/2$ satisfy the relation \eqref{eq:tri}.
 Then for each $B$ sufficiently close to $B_0$,
 there exists an abstract spherical triangle with angles  $A$, $B$ and 
 $\pi/2$.
 We identify the domain $D\subset\overline M$ with this triangle.
 Then by reflecting the metric,
 we have $d\sigma^2_{A,B,\pi/2}\in\metone(\overline M)$.
 Let $M:=\overline M\setminus\{p_1,\dots,p_n\}$ and
 $\pi\colon{}\widetilde M\to M$ the universal covering.
 By Proposition~\ref{prop:A} in the appendix 
 of this paper, the developing map 
 $\hat g_{A,B,\pi/2}$ of $d\sigma^2_{A,B,\pi/2}$ is defined on
 $\widetilde M$. 
 To simplify the notation, we set 
 \[
   \hat g_B:=\hat g_{A,B,\pi/2}\colon{}
        \widetilde M \longrightarrow \C\cup \{\infty\} \; . 
 \]
 Then by the monodromy principle (Lemma~\ref{lem:p-I}), we have
 \[
      \overline{\hat g_B\circ \mu_j}=\rho_j\star \hat g_B \qquad (j=1,2,3) \; .
 \]
\renewcommand{\qedsymbol}{}
\end{proof}
\begin{proof}[Step 2{\rm :}]
 One may regard the triangle $D\setminus \{V_2\}$ as generating 
 $\widetilde M$ by the three reflections across its edges.  
 We denote these reflections 
 by $\hat \mu_1$, $\hat \mu_2$, and $\hat \mu_3$; 
 that is, each $\hat \mu_j$ ($j=1,2,3$) is an antiholomorphic 
 transformation on $\widetilde M$ which preserves the $j$'th edge of 
 the triangle $D\setminus \{V_2\}$. 
 We set 
 \[
   \hat Q:=Q\circ \pi \;.
 \]
 Let $F=F_{t,B}$ be a solution of the following ordinary differential
 equation on $\widetilde M$:  
 \begin{equation}\label{eq:our_ode}
    F^{-1}dF=  
     t\begin{pmatrix} 
       {\hat g}_B & -{\hat g}_B^2 \\
       1 & -{\hat g}_B
      \end{pmatrix} \frac{\hat Q}{d{\hat g}_B},\qquad F(V_3)=\id\;.
 \end{equation}
 Such a solution $F_{t,B}$ is uniquely determined on $\widetilde M$.
 By \eqref{eq:our_ode}, the right-hand side of the ordinary
 differential equation is traceless and so $F_{t,B}$ takes values in
 $\SL(2,\C)$. 
 
 Then $\overline{F_{t,B}\circ \hat\mu_j}$ ($j=1,2,3$) has the Hopf
 differential $\hat Q=\overline{\hat Q\circ \hat \mu_j}$ and the secondary 
 Gauss map satisfies 
 $\overline{\hat g_B\circ \hat\mu_j}=\rho_j\star \hat g_B$.
 However, $F_{t,B}\,\rho_j^{-1}$ also has  the Hopf differential
 $\hat Q$ and secondary Gauss map $\rho_j\star \hat g_B$, by 
 \eqref{eq:three}.  
 Thus, by \eqref{eq:our_ode}, we have
 \[
   \left(\overline{F_{t,B}\circ \hat\mu_j}\right)^{-1}\!\!
       d \left(\overline{F_{t,B}\circ \hat\mu_j}\right)=
        \left(F_{t,B}\,\rho_j^{-1}\right)^{-1}
       d\left(F_{t,B}\,\rho_j^{-1}\right)=
   \rho_j 
   \begin{pmatrix} 
    \hat g_B & -\hat g_B^2 \\
    1 & -\hat g_B
   \end{pmatrix} \frac{\hat Q}{d\hat g_B}\,\rho_j^{-1}\;.
 \]
 This implies that $\overline{F_{t,B}\circ \hat\mu_j}$ and 
 $F_{t,B}\,\rho_j^{-1}$ are both solutions of the same ordinary
 differential equation, and thus they differ only by the choice of 
 initial values at the base point $V_3$.
 So there exists a matrix $\sigma_j(t,B)\in \SL(2,\C)$ such that
 \begin{equation}\label{eq:sigma}
  \overline{F_{t,B}\circ \hat\mu_j}=\sigma_j(t,B)\, 
   F_{t,B}\, \rho_j^{-1}\qquad (j=1,2,3).
 \end{equation}
 Since $\overline{F_{t,B}\circ \hat\mu_j(V_3)}=F(V_3)=\id$
 for $j=1,2$, 
 \[
    \id=\sigma_j(t,B)\,\rho_j^{-1}\qquad (j=1,2)\;.
 \]
 holds.
 Thus we have
 \begin{equation}\label{eq:A}
  \sigma_1(t,B)=\rho_1=\id,
   \qquad \sigma_2(t,B)=\rho_2=
   \begin{pmatrix}
    -i            & 0 \\
    \hphantom{-}0 & i
   \end{pmatrix}\;.
 \end{equation}
 In particular, the matrices $\sigma_1(t,B)$ and $\sigma_2(t,B)$ do not
 depend on $t$ nor the angle $B$.  
 Since $F_{0,B}=\id$, \eqref{eq:sigma} implies that
 \begin{equation}\label{eq:q}
  \sigma_3(0,B)=\rho_3(B)
    =\begin{pmatrix}
       q(B) & i \delta(B) \\
       i\delta(B) & \overline{q(B)}
   \end{pmatrix}\;. 
 \end{equation}
\renewcommand{\qedsymbol}{}
\end{proof}
\begin{proof}[Step 3{\rm :}]
 Next we shall describe the matrix $\sigma_3(t,B)$.
 We have
 \begin{align*}
    F_{t,B}&=F_{t,B}\circ \hat\mu_3 \circ \hat\mu_3 
            =\overline{%
               \overline{F_{t,B}\circ \hat\mu_3 \circ \hat\mu_3}}\\
           &=\overline{%
               \sigma_3(t,B)\bigl(F_{t,B}\circ \hat\mu_3\bigr)
                   \rho_3(B)^{-1}}
           =\overline{%
               \sigma_3(t,B)}\,\sigma_3(t,B)\,F_{t,B}\,\rho_3(B)^{-1}
               \overline{\rho_3(B)^{-1}}\\
           &=\overline{\sigma_3(t,B)}\,\sigma_3(t,B)\,F_{t,B}\;,
 \end{align*}
 where we used the fact $\overline{\rho_3(B)}\rho_3(B)$ is the
 identity. 
 Thus we have
 \[
   \overline{\sigma_3(t,B)}\sigma_3(t,B)=\id\;.
 \]
 By Lemma~\ref{fact}, the matrix $\sigma_3(t,B)$ can be written in  the
 following form
 \begin{equation}\label{eq:form}
  \sigma_3(t,B)=
   \begin{pmatrix}
       p(t,B) & i \nu_1(t,B) \\
       i\nu_2(t,B) & \overline{p(t,B)}
   \end{pmatrix}
      \qquad \text{with } 
      \nu_1,\nu_2\in \R \; , \quad 
      p\bar p+\nu_1\nu_2=1 \; .  
 \end{equation}
 We also have 
 \begin{align*}
  F_{t,B}\circ \hat\mu_2 \circ \hat\mu_3 
     &=\overline{\overline{F_{t,B}\circ \hat\mu_2 \circ \hat\mu_3}}
      =\overline{\sigma_3(t,B)\bigl(F_{t,B}\circ \hat\mu_2\bigr)\rho_3(B)^{-1}}\\
     &=
       \overline{\sigma_3(t,B)}\,\sigma_2(t,B)\,F_{t,B}\,\rho_2(B)^{-1}\,
       \overline{\rho_3(B)^{-1}}\;. 
 \end{align*}
 We may assume that the small disk centered at $V_1$ consists of
 $2l$-copies of $D$.
 Let $b$ be the branching order of $g$ at $V_1$.
 By the condition \ref{item:gen:branch}, we have
 \begin{equation}\label{eq:ang-rel}
    A=\pi \frac{b+1}{l} \; .
 \end{equation}
 Since $\hat\mu_2 \circ \hat\mu_3$ is the rotation of angle $2A$ at
 $V_1$,  
 we have $(\hat\mu_2 \circ \hat\mu_3)^l=\id$:
 \[
   F_{t,B}=F_{t,B}\circ (\hat\mu_2 \circ \hat\mu_3)^l
   =
   \left(\overline{\sigma_3(t,B)}\,\sigma_2(t,B)\right)^l
   F_{t,B}
   \left(\overline{\rho_3(B)}\,\rho_2(B)\right)^{-l}.
 \]
 On the other hand, one can easily check that the eigenvalues of
 $\overline{\rho_3(B)}\rho_2(B)$ are $\{-e^{iA},-e^{-iA}\}$. 
 Then by \eqref{eq:ang-rel}, the eigenvalues of
 $\left(\overline{\rho_3(B)}\rho_2(B)\right)^{l}$ are $\{\pm 1,\pm 1\}$, 
 that is
 \[
   \left(\overline{\rho_3(B)}\rho_2(B)\right)^{l}=\pm\id\;.
 \]
 So we have
 \[
   F_{t,B}=\pm \left(\overline{\sigma_3(t,B)}\sigma_2(t,B)\right)^l
   F_{t,B}
 \]
 which implies that
 \[
   \left(\overline{\sigma_3(t,B)}\sigma_2(t,B)\right)^l=\pm \id\;.
 \]
 This implies that the eigenvalues of
 $\overline{\sigma_3(t,B)}\sigma_2(t,B)$ are of the form
 $\{e^{\pi i N/l},e^{-\pi i N/l}\}$ for some integer $N$.
 Since $\overline{\sigma_3(t,B)}\sigma_2(t,B)$ is continuous with
 respect to the parameter $t$, 
 we can conclude that the eigenvalues of
 $\overline{\sigma_3(t,B)}\sigma_2(t,B)$ do not change by $t$. 
 In particular, 
 \begin{equation}\label{eq:trace1}
  \trace\overline{\sigma_3(t,B)}\sigma_2(t,B)
   =\trace\overline{\sigma_3(0,B)}\sigma_2(0,B)\;.
 \end{equation}
 On the other hand, since 
 $F_{t,B}\circ \hat\mu_2 \circ \hat\mu_3 
 =\overline{\sigma_3(t,B)}\sigma_2(t,B)\,F_{t,B}\,
 (\overline{\rho_3(B)}\rho_2(B))^{-1}$ and $F_{0,B}=\id$,
 we have 
 \begin{equation}\label{eq:trace2}
  \overline{\sigma_3(0,B)}\sigma_2(0,B)=
   \overline{\rho_3(B)}\rho_2(B)\;.
 \end{equation}
 By \eqref{eq:trace1}, \eqref{eq:trace2} and \eqref{eq:form}, we have
 \[
  2 \Im p(t,B)=\trace\overline{\sigma_3(t,B)}\,\sigma_2(t,B)
   =\trace\overline{\rho_3(B)}\,\rho_2(B)
   =2 \cos A\;.
 \]
\renewcommand{\qedsymbol}{}
\end{proof}
\begin{proof}[Step 4{\rm :}]
 Since $\sigma_3(0,B)=\rho_3(B)$, we have
 \[
   \Re p(0,B_0)=-\cos B_0\;.
 \]
 Now we would like to find a real valued smooth function $B(t)$ such
 that 
 \[
  \Re p(t,B(t))=-\cos B_0 \qquad (B(0)=B_0)\;.
 \]
 By the implicit function theorem, a sufficient condition for 
 the existence of such a $B(t)$ is 
 \[
   \left.
     \frac{\partial \Re p(t,B)}{\partial B}
            \right |_{(t,B)=(0,B_0)} \ne 0\;,
 \]
 and by \eqref{eq:form}, \eqref{eq:q}, \eqref{eq:C} and the assumption
 \ref{item:gen:angle}, we have 
 \begin{align*}
  \left.
  \frac{\partial \Re p(t,B)}{\partial B}
  \right |_{(t,B)=(0,B_0)}
  &=\left.
  \frac{\partial \Re p(0,B)}{\partial B}
  \right |_{B=B_0} \\
  &=\left.
  \frac{\partial \Re q(B)}{\partial B}
  \right |_{B=B_0} \\
  &=\left.
  -\frac{\partial \cos B}{\partial B}
  \right |_{B=B_0}=\sin B_0\ne 0\;. 
 \end{align*}
 This proves the existence of such a $B(t)$ 
 $(|t|<\varepsilon)$ for a sufficiently 
 small $\varepsilon>0$.
\renewcommand{\qedsymbol}{}
\end{proof}
\begin{proof}[Step 5{\rm :}]
 When $t=0$, it holds that $\sigma_3(t,B)=\rho_3(B)$, so 
 $\nu_1\nu_2>0$
 for sufficiently small $t$ ($|t|<\varepsilon$), by continuity.
 Now we set
  \[
     F_{t}:=
          \begin{pmatrix}   
            u(t)   &   0 \\
            0   &   u(t)^{-1}
          \end{pmatrix}
          F_{t,B(t)} \; , 
          \qquad
         u(t)=\sqrt[4]{\frac{\nu_2(t,B(t))}{\nu_1(t,B(t))}}
         \;.
  \]
 Obviously, $F_t$ satisfies the ordinary differential equation
 \eqref{eq:our_ode} for $B=B(t)$.
 In particular, $F_t$ has the Hopf differential $t\hat Q$ and the
 secondary Gauss map $\hat g_{B(t)}$. 
 Then by \eqref{eq:A} and \eqref{eq:form}, we get
 the following relations 
 \[
   \overline{F_t \circ \mu_j}
       =\sigma_j(t)\, F_t\, \rho_j(B(t)) \qquad (j=1,2,3)\;,
 \]
 where
 \begin{align*}
   \sigma_1(t)&=\id,\qquad
   \sigma_2(t)=\rho_2=
  \begin{pmatrix}
   -i &            0\\
   \hphantom{-}0 & i
  \end{pmatrix}\\
  \intertext{and}
  \sigma_3(t)&=
  \begin{pmatrix}
   p & i \sqrt{\nu_1\nu_2} \\
   i\sqrt{\nu_1\nu_2} & \bar p 
  \end{pmatrix}\qquad
  \bigl(
      p = p(t,B(t)),~\nu_j=\nu_j(t,B(t)),~j=1,2
  \bigr)\;.
 \end{align*}
 Since $\Im p(t,B(t))=\cos A$ and $\Re p(t,B(t))=-\cos B_0$, we have
 \[
    p(t,B(t))=i\cos A-\cos B_0=q(B_0) \;.
 \]
 Thus we have $\sigma_j(t)=\rho_j(B_0)$ for $j=1,2,3$.
 We now set
 \[
      f_t:=F_tF_t^*\colon\widetilde M\longrightarrow H^3\;.
 \]
 By \eqref{eq:first}, the first fundamental form of $f_t$ is given by
 \[
   ds^2:=\left(1+|\hat g_{B(t)}|^2\right)^2 
   \left|\frac{\hat Q}{d\hat g_{B(t)}}\right|^2\;.
 \]
 By the condition \ref{item:gen:branch} of the theorem, it is positive
 definite,
 and thus $f_t$ is a  conformal \cmcone{} immersion whose
 Hopf differential is $t\hat Q$ and the secondary Gauss map 
 $\hat g_{B(t)}$. 
\renewcommand{\qedsymbol}{}
\end{proof}
\begin{proof}[Step 6{\rm :}]
 We shall now prove that the conformal \cmcone{} immersion $f_t$
 is single-valued on $M=\overline M\setminus\{p_1,\dots,p_n\}$.
 Let $s$ be a non-negative integer and
 $\hat \mu_{i_1},\hat \mu_{i_2}, \dots , \hat \mu_{i_{2s}}$
 are sequences of three reflections $\hat \mu_1,\hat \mu_2,\hat \mu_3$
 on $M$ such that
 \[
   \pi\circ \hat \mu_{i_1}\circ \hat \mu_{i_2}\circ \cdots \circ 
   \hat \mu_{i_{2s}}=\pi\;,
 \]
 where $\pi\colon{}\widetilde M\to M$ be the universal covering.
 To show the single-valued property of $f_t$, it is sufficient to show
 that 
 $f_t= 
 f_t\circ \hat\mu_{i_1}\circ 
          \hat\mu_{i_2}\circ \cdots\circ \hat\mu_{i_{2s}}$.
 In fact, we have
 \begin{multline*}
  F_t\circ \hat\mu_{i_1}\circ 
  \hat\mu_{i_2}\circ \dots \circ \hat\mu_{i_{2s}}
  = \left(\overline{\rho_{i_1}(B_0)}\rho_{i_2}(B_0)
  \dots \overline{\rho_{i_{2s-1}}(B_0)}\rho_{i_{2s}}(B_0)\right)
  F_t \\
  \left(\overline{\rho_{i_1}(B(t))}\rho_{i_2}(B(t))
  \dots \overline{\rho_{i_{2s-1}}(B(t))}\rho_{i_{2s}}(B(t))\right)\;.
 \end{multline*}
 Since $g$ is single-valued on $M$, $\hat g:=g\circ\pi$ satisfies
 \[
   \hat g=\hat g\circ \hat\mu_{i_1}\circ \cdots \circ \hat\mu_{i_{2s}}\;.
 \]
 On the other hand, since $\hat g$ is the secondary Gauss map of
 $F_{t,B_0}$,
 we have by \eqref{eq:three} that
 \[
   \hat g=
   \hat g\circ \hat\mu_{i_1}\circ \cdots \circ \hat\mu_{i_{2s}}
   = \left(\overline{\rho_{i_1}(B_0)}\,\rho_{i_2}(B_0)
     \dots \overline{\rho_{i_{2s-1}}(B_0)}\,\rho_{i_{2s}}(B_0)\right)\star g\;.
  \]
 Thus we can conclude that
 \[
    \overline{\rho_{i_1}(B_0)}\,\rho_{i_2}(B_0)
    \dots \overline{\rho_{i_{2s-1}}(B_0)}\,\rho_{i_{2s}}(B_0)
     =\pm \id\;.
 \]
 Since $\rho_{i_{j}}(B(t))\in \SU(2)$ ($j=1,\dots,2s$), this implies that
 $f_t=F_tF_t^*$ is single-valued on $M$.

 Moreover, by the assumption \ref{item:gen:ord} and
 Lemma~\ref{lem:irreg}, each end is irregular.
 The Hopf differential of $f_t$ is $Q_t$, and the pseudometric
 $d\sigma^2_{f_t}$ defined in \eqref{eq:g-metric} is
 $d\sigma^2_{A,B(t),\pi/2}$.
 Since they are symmetric with respect to $D$, \eqref{eq:gauss} and 
 \eqref{eq:first} imply that the first and second fundamental forms 
 of $f_t$ are invariant under the reflections $\mu_j$ $(j=1,2,3)$.
 Then by the fundamental theorem of surfaces, $f_t$ is symmetric with
 respect to $D$.
\end{proof}
\begin{proof}[Proof of Theorem~\ref{thm:irred}]
 Let $\tau$ be a loop surrounding the point $V_2$ with the base point
 $V_3$
 and $T$ the covering transformation of $\widetilde M$ corresponding
 to $\tau$.
 Suppose that a neighborhood of $V_2$ is generated by $2k$-copies of 
 $D$.
 Then 
 \[
    T := (\hat \mu_3\circ\hat \mu_1) ^ k
 \]
 holds.
 Thus we have
 \begin{align*}
    F_{t,B}\circ\tau &=
              F_{t,B}\circ(\hat\mu_3\circ\hat\mu_1)^k\\
            &=\left(\overline{\sigma_3(t,B)}\,\sigma_1\right)^k
                F_{t,B}
             \left(\rho_1^{-1}\overline{\rho_3(B)^{-1}}\right)^k
            = \overline{\sigma_3(t,B)}^k F_{t,B}\,\overline{\rho_3(B)}^{-k}\;.
 \end{align*}
 Here, by the argument of Step 6 of the proof of Theorem~\ref{thm:gen},
 we have
 $\rho_3(B)^k=\pm\id$.
 Hence at the base point $V_3$, 
 \begin{align}\label{eq:tdiff}
    \left.\frac{\partial F_{t,B}\circ \tau}{\partial t}
       \right|_{(t,B)=(0,B_0)}
    &=
    \left.\frac{\partial}{\partial t}
       \overline{\sigma_3(t,B)}^k F_{t,B}\,\overline{\rho_3(B)}^{-k}
       \right|_{(t,B)=(0,B_0)}\\
    &=\pm\left.\frac{\partial}{\partial t}
       \overline{\sigma_3(t,B)}^k
       \right|_{(t,B)=(0,B_0)}
    \nonumber
 \end{align}
 since $F_{t,B}(V_3)=\id$.
 
 Since $F_{t,B}$ is a solution of
 \eqref{eq:our_ode}, it holds that 
 \begin{align*}
    \frac{\partial}{\partial z}
    \left[\left.\frac{\partial F_{t,B}}{\partial t}
    \right|_{(t,B)=(0,B_0)}\right]\,dz
      &= 
    d
    \left[\left.\frac{\partial F_{t,B}}{\partial t}
    \right|_{(t,B)=(0,B_0)}\right]\\
      &=
    \left.\frac{\partial }{\partial t} dF_{t,B}\right|_{(t,B)=(0,B_0)}\\
      &=
    \left.\frac{\partial}{\partial t}
        \left[
            t F_{t,B}
             \begin{pmatrix}
	       {\hat g}_B & -{\hat g}_B^2 \\
                1  & -{\hat g}_B
	     \end{pmatrix}\frac{\hat Q}{d\hat g_B}
        \right]\right|_{(t,B)=(0,B_0)} \\
      &=
         \begin{pmatrix}
	       \hat g & -\hat g^2 \\
                1  & -\hat g\hphantom{^2}
	 \end{pmatrix}\frac{\hat Q}{d\hat g} \; , 
 \end{align*}
 because $\hat g_{B_0}=\hat g=g\circ\pi$.
 Then by the assumption and \eqref{eq:tdiff}, 
 we have
 \[
    \left.\frac{\partial}{\partial t}
       \overline{\sigma_3(t,B)}^k
       \right|_{(t,B)=(0,B_0)}
       =\pm\oint_{\tau}
          \begin{pmatrix}
	       g & -g^2 \\
               1  & -g\hphantom{^2}
          \end{pmatrix}\frac{Q}{dg} \neq 0\;.
 \]
 This implies that $\sigma_3(t)=\sigma_3(t,B(t))$ is not constant 
 on $\{t\,;\,|t|<\varepsilon\}$ for sufficiently small $\varepsilon>0$, 
 and hence we have 
 \begin{equation}\label{eq:c-irred}
  B(t)\neq B_0 \qquad \text{for}\quad  0<|t|<\varepsilon\;.
 \end{equation}
 Hence the eigenvalues of $\rho_3(t):=\rho_3(B(t))$ are different from
 those of $\rho_3(B_0)$.
 
 The secondary Gauss map $\hat g_t$ of $F_t$ changes by the covering
 transformation $T$ as
 \[
    \hat g_t\circ T = \hat g_t\circ(\hat\mu_3\circ\hat\mu_1)^k
                 = \overline{\rho_3(t)}^k\star \hat g_t\;.
 \]
 Now, let $\widetilde V_2=\hat\mu_2(V_2)$ 
 (see Figure~\ref{fig:endreflect}).
\begin{figure}
\setlength{\unitlength}{5mm}
\small
\begin{picture}(11,7)(-5,-1)
 \Thicklines
  \put(0,4){\line(1,-1){4}}
  \put(0,0){\line(1,0){4}}
  \put(0,0){\line(0,1){4}}
 \thinlines
  \put(-4,0){\line(1,1){4}}
  \put(-4,0){\line(1,0){4}}
 \put(0.2,-0.2){\makebox(0,0)[lt]{$V_3$}}
 \put(4,-0.2){\makebox(0,0)[lt]{$V_2$}}
 \put(-0.2,4){\makebox(0,0)[rb]{$V_1$}}
 \put(-4,-0.2){\makebox(0,0)[lt]{$\widetilde V_2$}}
 \put(2,0){\vector(0,1){0.5}}
 \put(2,0){\vector(0,-1){0.5}}
 \put(0,2){\vector(1,0){0.5}}
 \put(0,2){\vector(-1,0){0.5}}
 \put(2,2){\vector(1,1){0.35355339}}
 \put(2,2){\vector(-1,-1){0.35355339}}
 \put(-2,2){\vector(-1,1){0.35355339}}
 \put(-2,2){\vector(1,-1){0.35355339}}
 \put(2.2,-0.3){\makebox(0,0)[lt]{$\hat\mu_1$}}
 \put(-0.3,1.8){\makebox(0,0)[rt]{$\hat\mu_2$}}
 \put(2.6,2){\makebox(0,0)[lt]{$\hat\mu_3$}}
 \put(-3.2,2){\makebox(0,0)[lt]{$\tilde\mu_3$}}
\end{picture}
\caption{$\widetilde V_2$ and $\tilde\mu_3$.}
\label{fig:endreflect}
\end{figure}
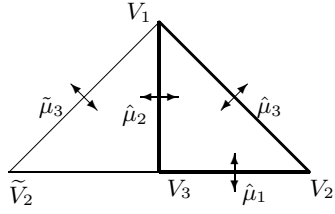
 We denote by $\tilde\mu_3$ the reflection about the edge $V_1\widetilde V_2$.
 Then we have
 \[
    \tilde\mu_3=\hat\mu_2\circ\hat\mu_3\circ\hat \mu_2\;.
 \]
 Let $\tilde \tau$ be a loop surrounding $\widetilde V_2$ with base
 point $V_3$, and let $\widetilde T$ be the covering transformation
 corresponding to $\tilde \tau$.  Then we have
 \[
    \widetilde T = (\hat\mu_1\circ\tilde\mu_3)^k=
               (\hat\mu_1\circ\hat\mu_2\circ\hat\mu_3\circ\hat\mu_2)^k\;,
 \]
 and
 \[
    \hat g_t\circ\widetilde T = 
       \left(\overline{\mathstrut\rho_1}\,\rho_2\,
                       \overline{\rho_3(t)}\,\rho_2\right)^k
          \star \hat g_t
          = (-1)^{k-1}\rho_2\left(\overline{\rho_3(t)}^k\right)
                  \rho_2\star \hat g_t\;.
 \]
 So, to show irreducibility, it is sufficient to show that the
 matrices
 \[
      a:=\overline{\rho_3(t)}^k
      \qquad \text{and}\qquad
      b:=\rho_2\overline{\rho_3(t)}^k\rho_2 = \rho_2\,a\,\rho_2 
 \]
 do not commute.
 By \eqref{eq:rho}, $\overline{\rho_3(t)^k}\rho_3(t)^k=\id$ holds.
 Then by Lemma~\ref{fact}, the off-diagonal components of $a$ are
 coincide and pure imaginary .
 Set
 \[
     a=\overline{\rho_3(t)}^k = \begin{pmatrix}
				  r & i\beta \\
                                  i\beta & \bar r
                                 \end{pmatrix}
     \qquad (\beta\in\R,\,r\bar r+\beta^2=1)\;.
 \]
 Then we have
 \[
     b = \rho_2 \, a \,\rho_2 =
        \begin{pmatrix}
	    -r & i\beta \\
            i\beta & -\bar r
        \end{pmatrix}\;,
 \]
 and 
 \[
   \left[a,b\right] = a b - b a
       =\begin{pmatrix}
	   0 & -2\beta\Im r\\
          2\beta\Im r & 0
        \end{pmatrix}\;,
 \]
 that is,  $a$ and $b$ commute if and only if $\beta=0$ or $r$ is a
 real number.

 First, we consider the case $\beta=0$.
 Then $a=\overline{\rho_3(t)}^k$ is a diagonal matrix whose eigenvalues
 are different from $\pm 1$ for sufficiently small $t$, because $B(t)$ is
 not constant.
 In particular, the two eigenvalues of $a$ are distinct.
 This implies that the eigenspaces of $a$ coincide of those of 
 $\overline{\rho_3(t)}$.
 Since $a$ is diagonal, this implies that $\rho_3(t)$ is also a diagonal
 matrix, a contradiction.
 
 Next, assume $r$ is real.
 Then there exists a real number $\theta$ such that 
 \begin{multline*}
      a = \overline{\rho_3(t)}^k
        =\begin{pmatrix}
	  \cos\theta & i\sin\theta \\
          i\sin\theta & \cos\theta
         \end{pmatrix}
        =P\begin{pmatrix} e^{i\theta} & 0 \\ 0 & e^{-i\theta}\end{pmatrix}
         P^{-1}\;,\\
      \text{where }
         P=\frac{1}{\sqrt{2}}
             \begin{pmatrix}
                   1 & -1 \\
                   1 & \hphantom{-}1
             \end{pmatrix}\; \qquad \text{and}\qquad
          \theta\in\R\setminus\pi\Z\;.
 \end{multline*}
 In this case, $(P^{-1}\overline{\rho_3(t)}P)^k$ is a diagonal matrix 
 whose eigenvalues differ from $\pm 1$, for sufficiently small $t\neq 0$.
 Then, by a similar argument to the previous case, we have
 $P^{-1}\overline{\rho_3}(t)P$ is diagonal.
 If $A\not\equiv\pi/2 \pmod{\pi}$, this contradicts to \eqref{eq:rho}
 and \eqref{eq:C}.
 Hence $a$ and $b$ do not commute.
\end{proof}
\section{An example of genus one}
\label{sec:torus}
In the final section, we construct an example of a \cmcone{} surface 
of genus one with four irregular ends.

Let $\Gamma=\Z\oplus i\Z$ be the lattice of Gaussian integers of 
$\C$ and let 
\[
   \overline M:= \C/2\Gamma\;.
\]
We consider $\overline M$ as the square 
$[-\frac{1}{2},\frac{3}{2}]\times[-\frac{1}{2},\frac{3}{2}]$
in $\C=\R^2$, with opposite edges identified.  
Take a triangle $D$ on $\overline M$ as in Figure~\ref{fig:fund-torus}.
Then $\overline M$ is obtained from $D$ by reflecting across the 
edges of $D$.  
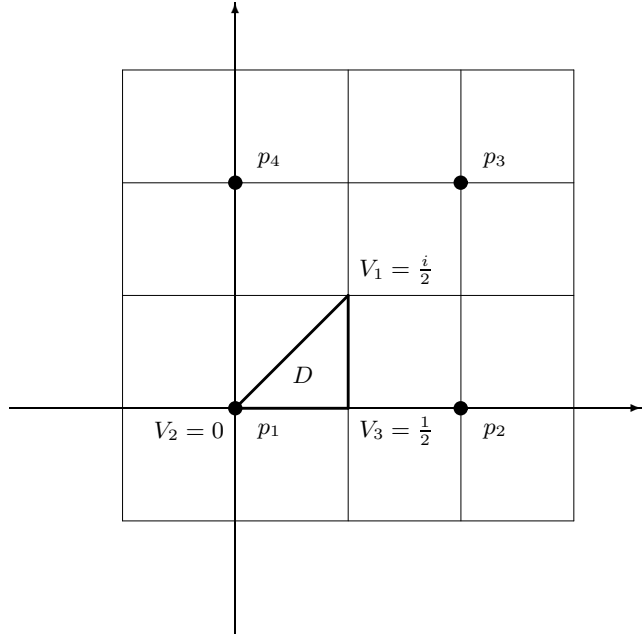
\begin{figure}
 \setlength{\unitlength}{30mm}
 \small
 \begin{center}
  \begin{picture}(3,3)(-1,-1)
   \put(-1,0){\vector(1,0){2.8}}
   \put(0,-1){\vector(0,1){2.8}}
   \drawline(-0.5,-0.5)(-0.5,1.5)(1.5,1.5)(1.5,-0.5)(-0.5,-0.5)
   \drawline(-0.5,1)(1.5,1)
   \drawline(1,-0.5)(1,1.5)
   \drawline(-0.5,0.5)(1.5,0.5)
   \drawline(0.5,-0.5)(0.5,1.5)
   \Thicklines
   \drawline(0,0)(0.5,0)(0.5,0.5)(0,0)
   \put(0,0){\circle*{0.05}}
   \put(1,0){\circle*{0.05}}
   \put(0,1){\circle*{0.05}}
   \put(1,1){\circle*{0.05}}
   \put(0.3,0.15){\makebox(0,0){$D$}}
   \put(0.55,-0.1){\makebox(0,0)[lc]{$V_3=\frac{1}{2}$}}
   \put(0.55,0.55){\makebox(0,0)[lb]{$V_1=\frac{i}{2}$}}
   \put(-0.05,-0.1){\makebox(0,0)[rc]{$V_2=0$}}
   \put(0.1,-0.1){\makebox(0,0)[lc]{$p_1$}}
   \put(1.1,-0.1){\makebox(0,0)[lc]{$p_2$}}
   \put(1.1,1.1){\makebox(0,0)[lc]{$p_3$}}
   \put(0.1,1.1){\makebox(0,0)[lc]{$p_4$}}
  \end{picture} 
 \end{center}
\caption{Fundamental domain of the torus}
\label{fig:fund-torus}
\end{figure}

Using the Weierstrass $\wp$ function with respect to $\Gamma$
(not with respect to $2\Gamma$), 
we set
\[
    Q = \bigl(\wp'(z)\bigr)^2\,dz^2\;.
\]
Then $Q$ has poles at $\{p_1,p_2,p_3,p_4\}=\{0,1,1+i,i\}$, each 
with order $6$.
The $\wp$-function with respect to the square lattice has the following
properties
\[
   \overline{\wp(\bar z)}=\wp(z)\;,\qquad
   \overline{\wp(-\bar z)}=\wp(z)\;,\qquad
   \overline{\wp(i\bar z)}=-\wp(z)\;.
\]
Hence $Q$ is symmetric with respect to $D$.

Consider an abstract spherical triangle $\Trig(A,B_0,C)$ 
with 
\[
   A = \frac{3}{4}\pi\;,\qquad
   B_0 = \frac{\pi}{2}\;,\qquad
   C = \frac{3}{2}\pi\;,
\]
and identify the triangle with the fundamental region $D$.
Then the metric of $\Trig(A,B_0,C)$ can be extended to 
$d\sigma^2_{A,B_0,C}\in\metone(\overline M)$ by reflections.
Since $A$, $B_0$ and $C$ satisfy \eqref{eq:tri}, $d\sigma^2_{A,B_0,C}$
is non-degenerate.
Let $g$ be the developing map of $d\sigma^2_{A,B_0,C}$.
Since the conical orders of $d\sigma^2_{A,B_0,C}$ are integers, $g$ is
well-defined on $\C$.

Now, we prove that $g$ is well-defined on $\overline M$.
By the monodromy principle (Lemma \ref{lem:p-I}), one can choose 
$g$ such that $\overline{g\circ\hat\mu_j}=\rho_j\star 
g$ ($j=1,2,3$), where
\[
   \rho_1:=\id,\qquad
   \rho_2:=\begin{pmatrix}
              i &  \hphantom{-}0 \\
              0 &  -i
	   \end{pmatrix},\qquad
   \rho_3:=\frac{1}{\sqrt{2}}
           \begin{pmatrix}
	    \hphantom{-}i & \pm i \\
            \pm i         & -i
           \end{pmatrix}
\]
and $\hat\mu_1$, $\hat\mu_2$ and $\hat\mu_3$ are reflections along 
the edges $V_3V_1$, $V_1V_2$ and $V_2V_3$, respectively.
We denote by  $\tau_1$ and $\tau_2$ the translations $z\mapsto z+1$ 
and $z\mapsto z+i$ respectively.
Then 
\[
    \tau_1=\hat\mu_2\circ\hat\mu_3\circ\hat\mu_1\circ\hat\mu_3,
    \qquad
    \tau_2=\hat\mu_3\circ\hat\mu_2\circ\hat\mu_3\circ\hat\mu_1
\]
holds.
So we have
\begin{align*}
   g\circ\tau_1 &= \overline{\rho_2}\,\rho_3\,\overline{\rho_1}\,\rho_3
                  \star g
                = \rho_2\star g\;,\\
   g\circ\tau_2 &= \overline{\rho_3}\,\rho_2\,\overline{\rho_3}\,\rho_1
                  \star g
                = \pm\begin{pmatrix}
		        0 & i \\ i & 0
                     \end{pmatrix}\star g\;.
\end{align*}
Thus
\[
    g(z+2)=g\circ\tau_1\circ\tau_1(z) = g(z)\;,\qquad
    g(z+2i)=g\circ\tau_2\circ\tau_2(z) =g(z)
\]
hold.
This shows that $g$ is invariant under the action of the lattice
$2\Gamma$.
Hence $g$ is a meromorphic function on $\overline M$.

One can easily see that the same result as Theorem~\ref{thm:gen}
holds when $C=3\pi/2$, instead of $\pi/2$.  Hence 
we have a one-parameter family $\{f_t\}$ of \cmcone{} immersions of
$\overline M\setminus\{p_1,p_2,p_3,p_4\}$ into $H^3$ with irregular
ends.

\appendix
\begingroup
\renewcommand{\thesection}{\Alph{section}}
\section{}
\label{app:A}
\endgroup
For a compact Riemann surface $\overline M$ and points 
$p_1,\dots,p_n\in \overline M$, a conformal metric $d\sigma^2$ of
constant curvature $1$ on $M:=\overline M\setminus\{p_1,\dots,p_n\}$ is
an element of $\metone(\overline M)$ if there exist real numbers
$\beta_1,\dots,\beta_n >-1$ so that each $p_j$ is a conical singularity
of conical order $\beta_j$, that is, if $d\sigma^2$ is 
asymptotic to $c_j|z-p_j|^{2\beta_j}\,dz\cdot d\bar z$ 
at $p_j$, for $c_j\neq 0$ and $z$ a local complex coordinate around
$p_j$.  
We call the formal sum 
\begin{equation}\label{eq:divisor}
   D:=\sum_{j=1}^n \beta_j\, p_j 
\end{equation}
the {\em divisor\/} corresponding to $d\sigma^2$.
For a pseudometric $d\sigma^2\in\metone(\overline M)$ with divisor $D$,
there is a holomorphic map $g\colon{}\widetilde M\to\CP^1$ such that
$d\sigma^2$ is the pull-back of the Fubini-Study metric of $\CP^1$.  
This map, called the {\em developing map\/} of $d\sigma^2$, is uniquely
determined up to M\"obius transformations $g \mapsto a\star g$ for 
$a \in \SU(2)$.  
We have the following expression
\[
      \pi^*d\sigma^2=\frac{4\,dg\cdot d\bar g}{(1+|g|^2)^2}\;, 
\]
where $\pi:\widetilde M\to M$ is a covering projection.

Consider $d\sigma^2\in\metone(\overline M)$ with divisor $D$ as in 
\eqref{eq:divisor} and with the developing map $g$.  
Since the pull-back of the 
Fubini-Study metric of $\CP^1$ by $g$ is invariant under the deck
transformation group $\pi_1(M)$ of 
$M:=\overline M\setminus\{p_1,\dots,p_n\}$, there is a representation 
\[
    \rho_g\colon{}\pi_1(M)\longrightarrow 
    \SU(2) 
\]
such that 
\[ 
    g\circ T^{-1}  = \rho_g(T)\star g \qquad (T\in\pi_1(M))\;.
\]
The metric $d\sigma^2$ is called {\em reducible\/} if the image 
of $\rho_g$ is a commutative subgroup in $\SU(2)$, and is called 
{\em irreducible\/} otherwise.  
Since the maximal abelian subgroup of $\SU(2)$ is $\U(1)$, 
the image of $\rho_g$ for a reducible $d\sigma^2$ lies in a subgroup 
conjugate to $\U(1)$, and this image might be simply the identity.  
We call a reducible metric $d\sigma^2$ {\em $\Hyp^3$-reducible\/}
if the image of $\rho_g$ is the identity, and {\em $\Hyp^1$-reducible\/}
otherwise (for more on this, see \cite[Section~3]{ruy1}).

The following assertion was needed in Section~\ref{sec:reflection}: 
\begin{proposition}\label{prop:A}
Let  $d\sigma^2$ be a metric of constant curvature $1$ defined on
$M=\overline{M} 
\setminus \{p_1,\dots,p_n\}$ whose conical order at each $p_j$ is an 
integer. 
Then the developing map $g$ of $d\sigma^2$ is single-valued 
on the universal covering of $M$.
\end{proposition}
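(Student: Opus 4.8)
The developing map is automatically single-valued on the simply connected surface $\widetilde M$, so the substance of the statement --- and the form in which it is used in Sections~\ref{sec:platonic} and~\ref{sec:torus} --- is that $g$ descends, once the punctures are reinstated, to a single-valued meromorphic map on the universal covering of $\overline M$ (which is $\CP^1$, $\C$, or the unit disk). The plan is to prove this by showing that the monodromy of $g$ around each $p_j$ is trivial and that $g$ extends meromorphically across each $p_j$; the global descent is then immediate, so the whole problem is local at the punctures.

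Fix $j$ and a local coordinate $z$ centered at $p_j$, so that a punctured disk $D^*=\{0<|z|<\varepsilon\}$ lies in $M$, and let $\gamma_j\in\pi_1(M)$ be a small loop around $p_j$. By the set-up of this appendix the monodromy matrix $\rho_j:=\rho_g(\gamma_j)$ lies in $\SU(2)$, hence is conjugate in $\SU(2)$ to $\operatorname{diag}(e^{-i\pi\theta_j},e^{i\pi\theta_j})$ for some $\theta_j\in\R$. Replacing $g$ by $a\star g$ for a suitable $a\in\SU(2)$ --- which changes neither $d\sigma^2$ nor the assertion --- we may assume $\rho_j$ is this diagonal matrix, i.e.\ analytic continuation once around $\gamma_j$ multiplies $g$ by $e^{-2\pi i\theta_j}$; equivalently, $z^{\theta_j}g(z)$ is single-valued on $D^*$. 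Now I invoke that $d\sigma^2$ is a genuine curvature-one conical metric with integer conical order $\beta_j$ at $0$: from $d\sigma^2=\frac{4|g'|^2}{(1+|g|^2)^2}|dz|^2$ together with $d\sigma^2\sim c_j|z|^{2\beta_j}|dz|^2$ ($c_j\neq 0$) near $0$ --- equivalently, from the fact that the Schwarzian $S(g)$ has a double pole at $0$ with leading coefficient $\tfrac12\bigl(1-(\beta_j+1)^2\bigr)$ --- one reads off that $\theta_j\equiv\beta_j+1\pmod 2$ and that the single-valued function $z^{\theta_j}g(z)$ has at worst a pole at $0$. Since $\beta_j\in\Z$ this forces $\theta_j\in\Z$: thus $\rho_j=\pm\id$, which acts trivially as a M\"obius transformation, and $g=z^{-\theta_j}\bigl(z^{\theta_j}g\bigr)$ is itself a single-valued meromorphic map from the full disk $D=\{|z|<\varepsilon\}$ to $\CP^1$.

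For the global step, the monodromy representation $\rho_g\colon\pi_1(M)\to\SU(2)$ now sends every $\gamma_j$ to $\pm\id$, so the induced M\"obius monodromy of the $\CP^1$-valued map $g$ annihilates the normal subgroup of $\pi_1(M)$ generated by $\gamma_1,\dots,\gamma_n$ and therefore factors through $\pi_1(\overline M)$. Hence $g$ descends from $\widetilde M$ to the covering $\widehat M\setminus q^{-1}\{p_1,\dots,p_n\}\to M$, where $q\colon\widehat M\to\overline M$ is the universal covering of $\overline M$, and by the local extension established above $g$ extends meromorphically over the points of $q^{-1}\{p_1,\dots,p_n\}$. Thus $g$ is a single-valued meromorphic map on $\widehat M$, which proves the proposition; in particular, when $\overline M=\CP^1$ one obtains $g$ meromorphic on $\CP^1$ and when $\overline M$ is a torus one obtains $g$ meromorphic on $\C$, as asserted in the applications.

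The step requiring care --- the main obstacle --- is the purely local claim that $z^{\theta_j}g$ has only a pole at $0$, with no essential singularity and, in the resonant case $\beta_j+1\in\Z$, no logarithmic term; this is precisely where the hypothesis that $d\sigma^2$ is an honest conical metric (positive definite on $M$, with the prescribed power asymptotics at $p_j$) is used. A logarithmic term in a developing map forces the local monodromy to be parabolic rather than elliptic, which is impossible since $\rho_j\in\SU(2)$; and if $z^{\theta_j}g$ had an essential singularity at $0$, then by the Casorati--Weierstrass theorem $\frac{4|g'|^2}{(1+|g|^2)^2}$ would oscillate near $0$ and could not be asymptotic to $c_j|z|^{2\beta_j}$. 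Granting this local normal form, the remaining arguments are the standard bookkeeping of developing maps and covering spaces.
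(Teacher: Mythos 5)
Your argument is correct and is essentially the standard (and the paper's) route: at a puncture of integer conical order the local monodromy of the developing map, lying in $\SU(2)$, must be $\pm\id$ (the Schwarzian has a double pole with leading coefficient $\frac{1}{2}\bigl(1-(\beta_j+1)^2\bigr)$, and $\SU(2)$ contains no parabolic element, so neither a nontrivial eigenvalue ratio nor a logarithmic term can occur), $g$ then extends meromorphically across the puncture, and the global claim follows by covering-space descent; your reading of the statement as an extension result is also the right one, and since your key lemma is purely local it equally covers the use of the proposition in Step~1 of Section~\ref{sec:proof}, where the integer-order cone points lie in the interior of $M$. The one loosely phrased step is the exclusion of an essential singularity via Casorati--Weierstrass: note instead that $\beta_j\in\Z$ together with $\beta_j>-1$ forces $\beta_j\ge 0$, so the spherical derivative of $g$ is bounded near $p_j$ and the singularity is removable for the $\CP^1$-valued map (alternatively, the spherical area near $p_j$ is finite, while Picard would make it infinite at an essential singularity); this is a one-line repair rather than a gap.
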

Let $p_1,\dots,p_{n-1}$ be distinct points in $\C$ and $p_n=\infty$.  
We set 
\[
   M_{p_1,\dots,p_n}:=\C\cup\{\infty\} 
     \setminus \{p_1,p_2,\dots,p_n\} \qquad (p_n=\infty)
\]
and $\widetilde{M}_{p_1,\dots,p_n}$ its universal covering.  
\begin{corollary}\label{cor:A}
 Let $d\sigma^2$ be a metric of constant curvature $1$ defined on
 $M_{p_1,\dots,p_n}$ $(p_n=\infty)$ whose conical order at each 
 $p_j$ is an integer. 
 Then the developing map $g$ of $d\sigma^2$ is  single-valued  on
 $M_{p_1,\dots,p_n}$ and extends as a meromorphic function on
 $\C\cup\{\infty\}$.
 Moreover, the divisor of $d\sigma^2$ coincides with the ramification
 divisor of the meromorphic function $g$.
\end{corollary}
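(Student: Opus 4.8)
The plan is to derive the Corollary from Proposition~\ref{prop:A} together with a local analysis of the developing map at the punctures. Write $\beta_j$ for the conical order of $d\sigma^2$ at $p_j$; since the $\beta_j$ are integers with $\beta_j>-1$, each $\beta_j$ is a non-negative integer. First I would use Proposition~\ref{prop:A}, which tells us that $g$ is single-valued on the universal covering of $M_{p_1,\dots,p_n}$. To descend $g$ to $M_{p_1,\dots,p_n}$ itself I would examine the monodromy around each puncture $p_j$: a spherical metric with a conical singularity of non-negative integer order $\beta_j$ has, up to a M\"obius transformation, a local developing map that is meromorphic at $p_j$ with ramification index $\beta_j+1$, so the monodromy of $g$ around $p_j$ lies in $\{\pm\id\}\subset\SU(2)$. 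Since $\pm\id$ act trivially by M\"obius transformations, $g$ descends to a well-defined holomorphic map $g\colon M_{p_1,\dots,p_n}\to\CP^1$.

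Next I would extend $g$ over the punctures. The same local normal form shows that $g$ extends meromorphically across each finite $p_j$ with ramification index $\beta_j+1$ there, and the identical argument carried out in the coordinate $w=1/z$ shows that $g$ also extends meromorphically across $p_n=\infty$, again with ramification index $\beta_n+1$. Thus $g$ is a meromorphic function on all of $\CP^1$; in particular it is single-valued on $M_{p_1,\dots,p_n}$, which together with the previous paragraph gives the first two assertions.

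For the last assertion, note that on $M_{p_1,\dots,p_n}$ the metric $d\sigma^2=4\,dg\cdot d\bar g/(1+|g|^2)^2$ is everywhere nondegenerate, so $g$ is unramified there, and in particular its poles on $M_{p_1,\dots,p_n}$ are all simple. Combined with the ramification indices $\beta_j+1$ at the points $p_j$, this shows that the ramification divisor of $g$ is exactly $\sum_j\beta_j\,p_j$, which is by definition the divisor of $d\sigma^2$. (As a cross-check on degrees, Gauss--Bonnet for the cone metric gives $\deg g=1+\tfrac{1}{2}\sum_j\beta_j$, whence Riemann--Hurwitz yields ramification degree $2\deg g-2=\sum_j\beta_j$.)

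The step I expect to be the main obstacle is the local fact used throughout: at a conical singularity of non-negative integer order, the developing map is meromorphic --- with no essential singularity, no genuine multivaluedness, and no logarithmic ``bubbling'' terms --- and has ramification index one greater than the conical order. This is precisely where integrality of the conical orders is used, and it is the heart of Proposition~\ref{prop:A}; in a full write-up I would either extract it from the proof of that proposition or cite the local classification of spherical cone metrics in \cite{uy7}. Once it is granted, everything else is the routine bookkeeping that matches the asymptotics $4|g'|^2/(1+|g|^2)^2\sim c_j|z-p_j|^{2\beta_j}$ with the order of vanishing (or the pole order) of the meromorphic function $g$.
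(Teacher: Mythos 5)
The appendix of this paper states both Proposition~\ref{prop:A} and Corollary~\ref{cor:A} without proof, so there is no written argument of ours to compare yours against; judged on its own terms, your derivation is the natural (and surely the intended) one, and its global bookkeeping is correct: trivial local monodromy at each puncture kills the representation $\rho_g$ because $\pi_1(M_{p_1,\dots,p_n})$ is generated by loops about the punctures (do state this explicitly --- it is exactly where genus zero enters, and it explains why the Proposition on a general $\overline M$ can only assert single-valuedness on a covering while the Corollary gets a single-valued $g$ on $M_{p_1,\dots,p_n}$ itself); the extension over each $p_j$, including $p_n=\infty$ in the coordinate $w=1/z$; the fact that $g$ is unramified on $M_{p_1,\dots,p_n}$ because $d\sigma^2$ is nondegenerate there; and the identification of the ramification divisor with $\sum_j\beta_j\,p_j$, with the Gauss--Bonnet/Riemann--Hurwitz consistency check done correctly.

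The one substantive issue is the point you yourself flag: everything rests on the local lemma that at a conical singularity of integer order $\beta\geq 0$ the developing map is, up to a M\"obius transformation in $\PSU(2)$, equal to $w^{\beta+1}$ in a suitable local coordinate, so that the local monodromy is $\pm\id$ and $g$ extends meromorphically with multiplicity $\beta+1$. The literal statement of Proposition~\ref{prop:A} does not deliver this (single-valuedness on the universal cover of $M$ is automatic for any developing map), so a complete write-up must prove the lemma rather than only point at it. The standard argument is short and worth including: the Schwarzian $S(g)$ descends to the punctured disk and has at most a double pole at $p_j$ with leading coefficient determined by the cone angle, so $g$ is a ratio of solutions of a Fuchsian equation whose indicial exponents differ by the integer $\beta_j+1$; a logarithmic solution would force the monodromy to be parabolic, which is impossible for an element of $\PSU(2)$, hence there are no logarithmic terms, the monodromy is $\pm\id$, and $g=w^{\beta_j+1}$ after a M\"obius transformation. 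This is precisely where integrality of the conical orders is used, as you observe; alternatively a citation to the local analysis in \cite{uy7} (rather than to Proposition~\ref{prop:A} as stated) would close the gap.
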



\end{document}